\newtheorem{thm}{Theorem}[section]
\newtheorem{cor}[thm]{Corollary} 
\newtheorem{lem}[thm]{Lemma} 
\newtheorem{prop}[thm]{Proposition}
\theoremstyle{definition}
\theoremstyle{remark}
\newtheorem{remark}[thm]{Remark}
\numberwithin{equation}{section}
\newcommand{\R}{\mathbb{R}}
\newcommand{\N}{\mathbb{N}}
\newcommand{\Sph}{\mathbb{S}}
\newcommand{\abs}[1]{ \lvert #1 \rvert}
\newcommand{\supp}{\operatorname{supp}}
\newcommand{\PV}{\operatorname{P.V.}}
\newcommand{\dd}{\mathop{}\!d} 
\def\XXint#1#2#3{{\setbox0=\hbox{$#1{#2#3}{\int}$}
\vcenter{\hbox{$#2#3$}}\kern-.5\wd0}}
\newenvironment{PDE}
	{ \left \{
	\begin{array}{r@{ \ }l @{\quad \: \;} l}
	}
	{
	\end{array} \right . 
	}
\begin{document}

\title[The role of antisymmetric functions
in nonlocal equations]{The role of antisymmetric functions
in nonlocal equations}

\author[S. Dipierro]{Serena Dipierro}
\address{Serena Dipierro: Department of Mathematics and Statistics, The University of Western Australia, 35 Stirling Highway, Crawley, Perth, WA 6009, Australia}
\email{serena.dipierro@uwa.edu.au}

\author[G. Poggesi]{Giorgio Poggesi}
\address{Giorgio Poggesi: Department of Mathematics and Statistics, The University of Western Australia, 35 Stirling Highway, Crawley, Perth, WA 6009, Australia}
\email{giorgio.poggesi@uwa.edu.au}

\author[J. Thompson]{Jack Thompson}
\address{Jack Thompson: Department of Mathematics and Statistics, The University of Western Australia, 35 Stirling Highway, Crawley, Perth, WA 6009, Australia}
\email{jack.thompson@research.uwa.edu.au}

\author[E. Valdinoci]{Enrico Valdinoci}
\address{Enrico Valdinoci: Department of Mathematics and Statistics, The University of Western Australia, 35 Stirling Highway, Crawley, Perth, WA 6009, Australia}
\email{enrico.valdinoci@uwa.edu.au}

\subjclass[2020]{35B50, 35N25, 35B06}

\date{\today}

\dedicatory{}

\begin{abstract}
We use a Hopf-type lemma for antisymmetric super-solutions to the Dirichlet problem for the fractional Laplacian with zero-th order terms,
in combination with the method of moving planes, to prove symmetry for the {\it semilinear fractional parallel surface problem}. That is, we prove that non-negative solutions to semilinear Dirichlet problems for the fractional Laplacian in a bounded open set $\Omega \subset \R^n$ must be radially symmetric if one of their level surfaces is parallel to the boundary of $\Omega$; in turn, $\Omega$ must be a ball.

Furthermore, we discuss maximum principles and the Harnack inequality for antisymmetric functions in the fractional setting and provide counter-examples to these theorems when only `local' assumptions are imposed on the solutions. 
The construction of these counter-examples relies on an approximation result that states that
`all antisymmetric functions are locally antisymmetric and \(s\)-harmonic up to a small error'.
\end{abstract}

\maketitle

\section{Introduction and results}
In this paper, we are concerned with maximum principles for antisymmetric solutions to linear, nonlocal partial differential equations (\textsc{PDE}) where the nonlocal term is given by the fractional Laplacian and with the application of such maximum principles to the method of moving planes.

Since its introduction by Alexandrov \cite{MR0150709,MR143162} and the
subsequent refinement by Serrin~\cite{MR333220}, the method of moving planes has become a standard technique for the analysis of overdetermined \textsc{PDE}, and for proving symmetry and monotonicity results for solutions, see \cite{MR544879,MR634248,MR2293958,MR2366129,MR3040677,MR3189604,MR3385189,MR3454619}. In recent years, nonlocal \textsc{PDE}, particularly the ones involving the fractional Laplacian, have received a lot of attention, due to their consolidated ability to model complex physical phenomena, as well as their rich analytic theory. The method of moving planes has been also adapted in several works so that it may be applied to nonlocal problems, see
\cite{MR3395749,MR3827344,MR3881478,MR3836150,MR4313576,ciraolo2021symmetry}.

The method of moving planes typically leads to 
considering the difference between a given solution and its reflection, and such a difference is an antisymmetric function. However, in the local case, `standard' maximum principles
can be directly applied to the moving plane method,
without specifically relying on the notion of
antisymmetric functions, since the ingredients required
are all of `local' nature and can disregard the `global' geometric
properties of the objects considered. Instead,
in the nonlocal world, new maximum principles need to be developed which take into account the extra symmetry, to
compensate for global control of the function under consideration,
for example see \cite{MR3395749}. 

We will now state our main results.
Let \(s\in (0,1)\) and~\(n\geqslant 1\) be an integer. The fractional Laplacian \((-\Delta)^s\) is defined by \begin{align*}
(-\Delta)^s u(x) &= c_{n,s}\PV \int_{\R^n} \frac{u(x)-u(y)}{\vert x -y \vert^{n+2s}} \dd y
\end{align*} where \begin{align}
c_{n,s} = \frac{s 4^s \Gamma \big (\frac{n+2s}{2} \big )}{\Gamma(1-s) } \label{tK7sb}
\end{align}  is a positive normalisation constant and \(\PV\) refers to the Cauchy principle value. Moreover, let~\(\R^n_+ = \{ x\in \R^n \text{ s.t. } x_1>0\}\) and \(B_1^+ = B_1 \cap \R^n_+\). 

Our first result proves that `all antisymmetric functions are locally antisymmetric and \(s\)-harmonic up to a small error'. This is the analogue of \cite[Theorem 1]{MR3626547} for antisymmetric functions. 

\begin{thm} \label{u7dPW} Suppose that \(k \in \N\) and that \(f \in C^k(\overline{B_1})\) is \(x_1\)-antisymmetric. For all \(\varepsilon>0\), there exist a smooth \(x_1\)-antisymmetric function \(u:\R^n \to \R\) and a real number \(R>1\) such that \(u\) is \(s\)-harmonic in \(B_1\), \(u=0\) in \(\R^n\setminus B_R\), and \begin{align*}
\| u - f \|_{C^k(B_1)}<\varepsilon .
\end{align*}
\end{thm}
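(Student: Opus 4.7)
The idea is to reduce to the (non-antisymmetric) result \cite[Theorem 1]{MR3626547} and then recover the antisymmetry by taking the antisymmetric part of the approximating function. Throughout, let $\sigma \colon \R^n \to \R^n$ denote the reflection $\sigma(x_1, x_2, \ldots, x_n) = (-x_1, x_2, \ldots, x_n)$, so that $x_1$-antisymmetry of $w$ is the condition $w \circ \sigma = -w$.

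First, I would apply \cite[Theorem 1]{MR3626547} directly to $f$ (momentarily disregarding its antisymmetry) to produce a smooth function $v \colon \R^n \to \R$ and a radius $R>1$ such that $v$ is $s$-harmonic in $B_1$, $v \equiv 0$ outside $B_R$, and $\|v - f\|_{C^k(B_1)} < \varepsilon$. Then I would set
\[
u(x) := \tfrac{1}{2}\bigl(v(x) - v(\sigma(x))\bigr).
\]
By construction $u$ is smooth, $x_1$-antisymmetric, and vanishes outside $B_R$ (since $\sigma(B_R)=B_R$).

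It remains to verify the two non-trivial properties. For the $s$-harmonicity of $u$ in $B_1$, I would use that $(-\Delta)^s$ commutes with the isometry $\sigma$, which yields $(-\Delta)^s(v \circ \sigma)(x) = [(-\Delta)^s v](\sigma(x))$; for every $x \in B_1$, both $x$ and $\sigma(x)$ lie in $B_1$, and there $(-\Delta)^s v = 0$, so $(-\Delta)^s u = 0$ in $B_1$. For the $C^k$-closeness to $f$, I would exploit the antisymmetry of $f$, which gives $f = \tfrac{1}{2}(f - f \circ \sigma)$ and therefore
\[
u - f = \tfrac{1}{2}\bigl( (v-f) - (v-f)\circ \sigma \bigr).
\]
Since $\sigma$ is an isometry of $\R^n$ that maps $B_1$ onto itself, we have $\|(v-f)\circ\sigma\|_{C^k(B_1)} = \|v-f\|_{C^k(B_1)}$, and hence by the triangle inequality $\|u - f\|_{C^k(B_1)} \leqslant \|v - f\|_{C^k(B_1)} < \varepsilon$.

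I do not anticipate a serious obstacle: essentially all the technical content is already packaged into \cite[Theorem 1]{MR3626547}, and what remains is the (routine) observation that the antisymmetrisation map $v \mapsto \tfrac{1}{2}(v - v \circ \sigma)$ simultaneously preserves smoothness, compact support in $B_R$, and $s$-harmonicity on $B_1$, while improving the approximation of the already antisymmetric target $f$. The one point to remain careful about is that the region of $s$-harmonicity must be invariant under $\sigma$; this is why the statement is fully compatible with antisymmetrisation across $\{x_1=0\}$, which is a symmetry axis of $B_1$.
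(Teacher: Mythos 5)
Your argument is essentially the paper's own proof: apply the density theorem of \cite{MR3626547} to $f$, antisymmetrise the resulting $v$ via $u = \tfrac12(v - v\circ\sigma)$, and use that $\sigma$ is an isometry fixing $B_1$ to preserve both $s$-harmonicity and the $C^k$ error bound. The one point you gloss over is regularity: \cite[Theorem 1.1]{MR3626547} produces $v\in H^s(\R^n)\cap C^s(\R^n)$, not a smooth function, so to obtain the smooth $u$ promised in the statement one must first upgrade $v$ to a smooth function by mollification (checking that the mollified function is still $s$-harmonic in a ball containing $B_1$, still compactly supported, and still $\varepsilon$-close to $f$ in $C^k(B_1)$, which requires starting from a slightly larger ball $B_{1+\mu}$); the paper carries this out in Remark~\ref{BQkXm}. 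Apart from that omitted technical step, the proof is correct and coincides with the paper's.
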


When we say that \(u\) is \(x_1\)-antisymmetric we mean that \( u(-x_1,\dots,x_n)=-u(x_1,\dots,x_n) \) for all~\(x\in \R^n\). In \S\ref{9Hmeh} we will give a more general definition of antisymmetry for an arbitrary plane. As an application of Theorem \ref{u7dPW}, we can construct some counter-examples to the local Harnack inequality and strong maximum principle for antisymmetric \(s\)-harmonic functions, as follows:

\begin{cor}\label{X96XH}
There exist \(\Omega \subset \R_+^n\) and \(\Omega'\subset \subset \Omega\) such that, for all \(C>0\), there exists an \(x_1\)-antisymmetric function \(u:\R^n \to \R\) that is non-negative and \(s\)-harmonic in \(\Omega\), and \begin{align*}
\sup_{\Omega'} u > C \inf_{\Omega'} u .
\end{align*}
\end{cor}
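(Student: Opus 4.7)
The plan is to apply Theorem \ref{u7dPW} to an antisymmetric approximand $f$ designed so that $f$ is bounded below by a small positive constant on $\Omega$ while exhibiting a large ratio between its values at two chosen points of $\Omega'$. The $C^k$ approximation delivered by Theorem \ref{u7dPW} preserves both the positivity and the oscillation, producing an $s$-harmonic $u$ with the desired failure of Harnack.

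Concretely, I would fix $\Omega = B_r(x_0)$ and $\Omega' = B_{r/2}(x_0)$ for $x_0 = (1/2, 0, \dots, 0)$ and a sufficiently small $r>0$, ensuring $\Omega' \subset\subset \Omega \subset\subset B_1^+$, and pick two distinct points $p, q \in \Omega'$. Given $C > 0$, set $\delta := (3C)^{-1}$, choose $\psi \in C^\infty_c(B_1^+)$ with $\psi(p) = 1$, $\psi(q) = \delta$, and $\psi \geqslant \delta$ throughout $\Omega$, and define $f(x) := \psi(x) - \psi(-x_1, x_2, \dots, x_n)$. Then $f \in C^\infty_c(\R^n)$ is $x_1$-antisymmetric, coincides with $\psi$ on $\Omega$ (since $\Omega$ is well separated from the hyperplane $\{x_1 = 0\}$, the odd reflection does not interfere with the values on $\Omega$), and exhibits the desired ratio on $\Omega'$.

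Applying Theorem \ref{u7dPW} with $k = 1$ and $\varepsilon = \delta/2$ produces an $x_1$-antisymmetric function $u$ that is $s$-harmonic in $B_1$ and satisfies $\|u - f\|_{C^1(B_1)} < \varepsilon$. The pointwise bound $|u - f| < \varepsilon$ on $B_1$ gives, on $\Omega$, the positivity $u \geqslant f - \varepsilon \geqslant \delta/2 > 0$, and on $\Omega'$ the estimates $u(p) \geqslant 1 - \varepsilon$ and $u(q) \leqslant \delta + \varepsilon$, so that $\sup_{\Omega'} u / \inf_{\Omega'} u \geqslant (2 - \delta)/(3\delta) = 2C - 1/3$. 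This exceeds $C$ for $C \geqslant 1$; for smaller $C$ one simply reuses the function produced for $C = 1$.

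Since the entire argument is essentially a corollary of Theorem \ref{u7dPW}, no serious obstacle arises. The only point requiring care is in the construction of $f$: positivity on the whole of $\Omega$ and unbounded oscillation on $\Omega'$ must coexist, which is arranged by letting $f$ dip to the small positive value $\delta(C)$ on $\Omega'$ rather than to zero, and then letting $\delta \to 0$ as $C \to \infty$. This is precisely the feature that is unavailable under the \emph{global} nonnegativity hypothesis needed for the nonlocal Harnack inequality, highlighting why the antisymmetric setting with only local nonnegativity admits such counter-examples.
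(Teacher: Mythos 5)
Your proof is correct, and it takes a genuinely different route from the paper's. The paper first treats $n=1$ with an explicit one-parameter family of odd polynomials $f^{(\varepsilon)}$ on the fixed intervals $I'=(1,2)\subset\subset I=(1/2,5/2)$, normalised so that $\sup_{I'}f^{(\varepsilon)}=4$ and $\inf_I f^{(\varepsilon)}=2\varepsilon$, then applies Theorem~\ref{u7dPW} and lets $\varepsilon\to 0^+$ to blow up the Harnack quotient; the $n>1$ case is then obtained by tensoring, $u^{(\varepsilon)}(x)=v^{(\varepsilon)}(x_1)$, which requires the auxiliary fact from~\cite[Lemma 2.1]{MR3536990} that the $n$-dimensional fractional Laplacian of a function of $x_1$ alone reduces to the one-dimensional one. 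You instead work directly in $\R^n$, building $f$ as the odd reflection of a bump $\psi\in C^\infty_c(B_1^+)$ that equals $1$ at $p$, dips to $\delta=(3C)^{-1}$ at $q$, and stays $\geqslant\delta$ on $\Omega$; a single application of Theorem~\ref{u7dPW} with $\varepsilon=\delta/2$ then gives the ratio $\geqslant 2C-1/3$. Your route avoids the dimensional-reduction lemma entirely, keeps the domain and test points fixed as $C$ varies (matching the quantifier order of the statement cleanly), and makes the mechanism transparent, at the cost of being less explicit than the paper's polynomial family. One minor remark: your threshold ``$C\geqslant 1$'' is slightly conservative---the estimate $2C-1/3>C$ already holds for all $C>1/3$---but the fallback to the $C=1$ function handles the remaining range, so nothing is lost.
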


\begin{cor} \label{KTBle} There exist \(\Omega \subset \R^n_+\) and an \(x_1\)-antisymmetric function \(u:\R^n \to \R\) that is non-negative and \(s\)-harmonic in \(\Omega\), is equal to zero at a point in \(\Omega\), but is not identically zero in \(\Omega\).
\end{cor}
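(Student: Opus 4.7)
The plan is to combine Theorem \ref{u7dPW} with an intermediate-value argument applied to a one-parameter family of $x_1$-antisymmetric $s$-harmonic functions. I will tune a linear interpolation between two such functions so that its minimum over a fixed small ball $\Omega \subset \R^n_+$ equals zero and is attained at an interior point.

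To set up the construction, fix $p = e_1/2$ and $\rho = 1/4$, so that $\overline{B_\rho(p)} \subset B_{3/4} \cap \{x_1 \geq 1/4\} \subset \R^n_+$. Let $\eta \in C^\infty_c(B_1)$ be a radial cutoff equal to $1$ on $B_{3/4}$, and let $\zeta \in C^\infty_c(B_{\rho/2}(p))$ be nonnegative with $\zeta(p) = 1$. Define the smooth $x_1$-antisymmetric functions
\[
f_+(x) = x_1 \eta(x), \qquad f_-(x) = f_+(x) - 2 \bigl[\zeta(x) - \zeta(-x_1, x_2, \dots, x_n)\bigr].
\]
Since the support of $\zeta$ lies in $\{x_1 > 0\}$, the reflected bump vanishes on $\R^n_+$, so $f_- = f_+ - 2\zeta$ there. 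In particular $\zeta \equiv 0$ on $\partial B_\rho(p)$ gives $f_+ = f_- = x_1 \geq 1/4$ on $\partial B_\rho(p)$, while $f_+(p) = 1/2$ and $f_-(p) = -3/2$. Apply Theorem \ref{u7dPW} with $\varepsilon = 1/10$ to obtain smooth compactly supported $x_1$-antisymmetric functions $u_\pm : \R^n \to \R$ that are $s$-harmonic in $B_1$ and satisfy $\|u_\pm - f_\pm\|_{C^0(B_1)} < 1/10$.

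Now set $u_\lambda = (1-\lambda) u_+ + \lambda u_-$ for $\lambda \in [0, 1]$; this family is antisymmetric and $s$-harmonic in $B_1$ by linearity of $(-\Delta)^s$. The function $m(\lambda) := \min_{\overline{B_\rho(p)}} u_\lambda$ is continuous, with $m(0) \geq 3/20 > 0$ (since $u_+ \geq x_1 - 1/10 \geq 3/20$ on $\overline{B_\rho(p)}$) and $m(1) \leq u_-(p) \leq -3/2 + 1/10 < 0$. By the intermediate value theorem there is $\lambda^* \in (0, 1)$ with $m(\lambda^*) = 0$, attained at some $x^* \in \overline{B_\rho(p)}$. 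Setting $\Omega = B_\rho(p)$ and $u = u_{\lambda^*}$ yields a function with all the required properties, provided $x^*$ lies in the interior of $\Omega$.

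This interior localisation, the only delicate point, is exactly why $f_+$ and $f_-$ are arranged to coincide outside $B_{\rho/2}(p)$: the bound $u_\lambda \geq 3/20 > 0$ holds on $\partial B_\rho(p)$ uniformly in $\lambda \in [0, 1]$, forcing $x^* \in B_\rho(p) = \Omega$. The same bound, together with continuity of $u_{\lambda^*}$, moreover makes $u_{\lambda^*}$ strictly positive near $\partial \Omega$ inside $\Omega$, so $u_{\lambda^*}$ is not identically zero on $\Omega$. All other verifications---antisymmetry, $s$-harmonicity of $u_\lambda$, and continuity of $m$---are immediate from linearity of $(-\Delta)^s$ and the $C^0$-closeness provided by Theorem \ref{u7dPW}.
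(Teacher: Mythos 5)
Your proof is correct, but it takes a genuinely different route from the paper's. The paper reduces to dimension $n=1$, sets $\Omega = (0,3)$ (so that $\overline{\Omega}$ touches the reflection point), and uses the one-parameter family $\phi_t = v - t\zeta$, where $v$ is a single approximant produced by Theorem~\ref{u7dPW} and $\zeta$ is the explicit antisymmetric $s$-harmonic barrier constructed in Lemma~\ref{oujju}; because $\Omega$ reaches the origin, the paper needs $C^1$-closeness (not merely $C^0$) and a separate fundamental-theorem-of-calculus estimate to keep $\phi_{t_\star}$ nonnegative on $(0,1)$. Your construction instead places $\Omega = B_\rho(p)$ strictly inside $\R^n_+$, obtains \emph{both} endpoints $u_\pm$ of the interpolating family from two applications of Theorem~\ref{u7dPW}, and arranges the target functions $f_\pm$ to coincide and be uniformly positive on $\partial\Omega$, so that the convex combination $u_\lambda$ is bounded below by a positive constant on $\partial\Omega$ for every $\lambda$. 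This simultaneously forces the minimiser at the critical $\lambda^\ast$ into the interior and yields nonnegativity on $\overline{\Omega}$ with no extra estimate. The trade-off is that your argument is self-contained (no Lemma~\ref{oujju}, no $C^1$ estimate, and directly in every dimension), while the paper's version gives a counterexample with $\Omega$ touching the symmetry hyperplane, a slightly more extreme geometry.
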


For our second result, we use
a Hopf-type lemma (see the forthcoming Proposition~\ref{lem:FLIFu}) in combination with the method of moving planes to establish symmetry for the {\it semilinear fractional parallel surface problem}, which is described in what follows. 
%
%
Suppose that \(G\subset \R^n \) is open and bounded, and let~\(B_R\) denote the ball of radius \(R>0\) centered at 0. Let \(\Omega = G+B_R\), where, given sets \(A,B \subset \R^n\), \(A+B\) denotes the `Minkowski sum' of \(A\) and \(B\), defined as \begin{align*}
A+B = \{a+b \text{ s.t. } a\in A, b\in B\}. 
\end{align*} Consider the semilinear fractional equation \begin{align}
\begin{PDE}
(-\Delta)^s u &= f(u) &\text{in }\Omega, \\
u&=0 &\text{in }\R^n \setminus \Omega,\\
u&\geqslant 0 &\text{in } \Omega ,
\end{PDE} \label{rzZmb}
\end{align} with the overdetermined condition \begin{align}
u = c_0 \qquad \text{on } \partial G, \label{lnARf}
\end{align} for some (given) \(c_0\geqslant 0\). The \emph{semilinear fractional parallel surface problem} asks the following question:
for which~\(G\) does the problem~\eqref{rzZmb}-\eqref{lnARf} admit a non-trivial solution? The answer is that \(G\) must be a ball. More specifically, we have the following
result:

\begin{thm} \label{CccFw} Suppose that \(G\) is a bounded open set in \(\R^n\) with \(C^1\) boundary. Let~\(\Omega=G+B_R\), \(f:\R \to \R\) be locally Lipschitz, and \(c_0 \geqslant 0\). Furthermore, assume that there exists a non-negative function \(u \in C^s(\R^n)\) that is not identically zero and satisfies, in the pointwise sense,\begin{align} 
\begin{PDE}
(-\Delta)^s u &= f(u) &\text{in } \Omega,\\ u&=0 &\text{in } \R^n \setminus \Omega,  \\ u &= c_0 & \text{on } \partial G .
\end{PDE} \label{zcm6a}
\end{align} Then \(u\) is radially symmetric, \(u>0\) in \(\Omega\), and \(\Omega\) (and hence \(G\)) is a ball. 
\end{thm}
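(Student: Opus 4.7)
The plan is to prove Theorem~\ref{CccFw} by the method of moving planes, with Theorem~\ref{lem:FLIFu} playing the role of the classical Hopf boundary lemma.

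As a preliminary step, I would first establish $u > 0$ in $\Omega$ by rewriting $(-\Delta)^s u = f(u)$ as $(-\Delta)^s u - \tilde c(x) u = f(0)$ for the bounded coefficient $\tilde c(x) = (f(u(x)) - f(0))/u(x)$ (extended by $0$ on $\{u = 0\}$, and bounded thanks to the local Lipschitz property of $f$ together with the boundedness of $u$), and applying the fractional strong maximum principle to $u \geq 0$, $u \not\equiv 0$.

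For the symmetry, I would fix a direction (after rotation, take $e_1$) and introduce the standard moving-plane apparatus: $T_\lambda = \{x_1 = \lambda\}$, $\Omega_\lambda = \Omega \cap \{x_1 > \lambda\}$, the reflection $x^\lambda = (2\lambda - x_1, x_2, \dots, x_n)$, and the $T_\lambda$-antisymmetric combination $w_\lambda(x) := u(x^\lambda) - u(x)$. Linearising gives
\begin{equation*}
(-\Delta)^s w_\lambda + c_\lambda(x) w_\lambda = 0 \qquad \text{on } \Omega_\lambda ,
\end{equation*}
with $c_\lambda \in L^\infty$. For $\lambda$ just below $\sup_{\Omega} x_1$ the cap $\Omega_\lambda$ is a narrow region whose reflection lies in $\Omega$, and the narrow-region maximum principle for antisymmetric functions (available for $(-\Delta)^s$ with bounded zero-th order term) yields $w_\lambda \geq 0$. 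Decreasing $\lambda$, I define
\begin{equation*}
\lambda^* := \inf\{\lambda_0 \in \R : w_\lambda \geq 0 \text{ in } \Omega_\lambda \text{ for every } \lambda \geq \lambda_0\} ,
\end{equation*}
and aim to show $w_{\lambda^*} \equiv 0$, i.e., $T_{\lambda^*}$ is a plane of symmetry of $u$.

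The heart of the argument is the analysis at $\lambda^*$. Assuming $w_{\lambda^*} \not\equiv 0$ in $\Omega_{\lambda^*}$, the strong maximum principle for antisymmetric supersolutions gives $w_{\lambda^*} > 0$ throughout $\Omega_{\lambda^*}$. Applying Theorem~\ref{lem:FLIFu} (after translation and rescaling so that $T_{\lambda^*}$ is mapped to $\{x_1 = 0\}$) in balls contained in $\Omega$ and centred at interior points of $T_{\lambda^*} \cap \Omega$, I would upgrade this strict positivity to a uniform linear lower bound $w_{\lambda^*}(x) \geq \kappa(x_1 - \lambda^*)$ on a neighbourhood of $T_{\lambda^*} \cap \Omega$. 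The overdetermined condition $u = c_0$ on $\partial G$ then enters through the Minkowski structure $\Omega = G + B_R$: if $T_{\lambda^*}$ were not a plane of symmetry of $G$, a pair of points $p, p^{\lambda^*} \in \partial G$ (with $p$ in the cap and $p^{\lambda^*}$ not on $\partial G$) would, via the moving-plane inequality $u(p^{\lambda^*}) \geq u(p) = c_0$, either force interior zeroes of $w_{\lambda^*}$ (contradicted by the linear Hopf lower bound above) or allow the standard continuation argument (exhausting $\Omega_{\lambda^*}$ by a large compact $K$, transferring strict positivity to $w_{\lambda^* - \varepsilon}$ on $K$ by continuity, and applying the narrow-region principle on $\Omega_{\lambda^* - \varepsilon} \setminus K$) to push $\lambda$ past $\lambda^*$, contradicting its minimality. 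Hence $w_{\lambda^*} \equiv 0$. Running the same argument in every direction produces a hyperplane of symmetry of $u$ in each direction; since $\{u > 0\} = \Omega$, $\Omega$ inherits each such symmetry, and since all of these planes must contain the centroid of $\Omega$ they share a common point, so $\Omega$ is a ball. Via $\Omega = G + B_R$, $G$ is a concentric ball and $u$ is radial.

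The principal obstacle is the coupling of the overdetermined condition with the antisymmetric Hopf lemma in the continuation of the moving plane. In the local setting, the classical boundary Hopf lemma suffices to derive a contradiction from a non-symmetric critical position; in the fractional setting the boundary regularity of $u$ is only $s$-Hölder, which is too weak to be paired with the narrow-region principle near the antisymmetric plane $T_{\lambda^*}$. Theorem~\ref{lem:FLIFu} is tailored precisely for this: it provides the \emph{linear} interior growth of the antisymmetric supersolution from its plane of antisymmetry, which is exactly the strength required to close the continuation step and convert the parallel-surface condition $u = c_0$ on $\partial G$ into the radial conclusion.
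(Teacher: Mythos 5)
Your overall strategy is the same as the paper's: the moving planes method applied to the antisymmetric difference, started by a narrow-region/small-measure maximum principle, propagated by the strong maximum principle for antisymmetric supersolutions (Proposition~\ref{fyoaW}), and closed by the antisymmetric Hopf lemma (Theorem~\ref{lem:FLIFu}). However, your treatment of the critical position is where the argument has a genuine gap.

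In a parallel-surface problem the plane cannot be moved freely: the equation for $v_\lambda=u-u\circ Q_\lambda$ only holds on $\Omega_\lambda'=Q_\lambda(\Omega_\lambda)$ \emph{as long as $\Omega_\lambda'\subset\Omega$}, so there is a \emph{geometric} critical value $m$ defined by when this inclusion first becomes tight, and your analytic $\lambda^*$ has to be shown to equal $m$ before the overdetermined condition $u=c_0$ on $\partial G$ can enter. The paper does this explicitly ($\tilde m=m$, via a continuity-and-small-measure argument). More importantly, at $\lambda=m$ there are \emph{two distinct geometric alternatives}, and the roles of the maximum principle and the Hopf lemma in them are different: (Case~1) $\overline{\Omega_m'}$ touches $\partial\Omega$ at a point $p\notin T_m$, which by the Minkowski structure $\Omega=G+B_R$ produces a corresponding $q\in\partial G$ in the \emph{interior} of $\Omega_m'$ with $Q_m(q)\in\partial G$, hence $v_m(q)=c_0-c_0=0$, contradicted by the \emph{strong maximum principle} ($v_m>0$ in $\Omega_m'$); (Case~2) $T_m$ is tangent to $\partial\Omega$, which produces a tangency point $q\in T_m\cap\partial G$, at which $\partial_1 u(q)=0$ (since $\nabla u\perp\partial G$ there) and thus $\partial_1 v_m(q)=0$, contradicted by Theorem~\ref{lem:FLIFu}. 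You conflate these: you claim that an interior zero of $w_{\lambda^*}$ is ``contradicted by the linear Hopf lower bound'', but away from $T_{\lambda^*}$ the contradiction comes from the strong maximum principle, not from Theorem~\ref{lem:FLIFu}; the Hopf lemma is needed precisely (and only) in the tangency case, where $v_m$ vanishes on $T_m$ automatically by antisymmetry and one must work with the derivative. Without separating these two cases, your argument does not close.

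Two smaller remarks. First, the paper does not need your preliminary $u>0$ step: positivity (and $\mathrm{supp}\,u=\overline\Omega$) comes out at the end, after radial symmetry is established via \cite[Proposition~2.3]{MR2722502}; your version is not wrong but is extra work. Second, to start the moving plane you invoke a ``narrow-region'' maximum principle; since $c_\lambda$ only satisfies $\|c_\lambda\|_{L^\infty}\leqslant [f]_{C^{0,1}([0,\|u\|_\infty])}$, which may be large, the paper actually uses a quantitative small-measure principle driven by the eigenvalue lower bound of Proposition~\ref{ayQzh} to guarantee $\lambda_1(\Omega_\lambda')\to\infty$ as $\lambda\to M^-$. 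You should make this quantitative ingredient explicit, since a qualitative ``narrow region implies maximum principle'' statement does not by itself absorb a large $\|c_\lambda\|_{L^\infty}$.
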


In the local setting (i.e., \(s=1\)), symmetry and
%
%
stability results for \eqref{zcm6a} were obtained in \cite{MR3420522} and \cite{MR3481178}.
%
%
Such analysis was motivated by the study of invariant isothermic surfaces of a nonlinear nondegenerate fast diffusion equation (see \cite{MR2629887}). 

More recently in \cite{ciraolo2021symmetry}, the nonlocal case \(s\in (0,1)\) was addressed 
and the analogue to Theorem \ref{CccFw}, as well as its stability generalization, were proved in the particular case \(f \equiv 1\); those results were obtained by making use of the method of moving planes as well as a (quantitative) Hopf-type lemma (see \cite[formula~(3.3)]{ciraolo2021symmetry}), which could be obtained as an application of the boundary Harnack inequality for antisymmetric $s$-harmonic functions proved in \cite[Lemma~2.1]{ciraolo2021symmetry}. The arguments used in~\cite{ciraolo2021symmetry} to establish such a boundary Harnack inequality rely on the explicit knowledge of the fractional Poisson kernel for the ball. However, due to the general nonlinear term~\(f\) in \eqref{zcm6a}, here the method of moving planes leads to considering a linear equation involving the fractional Laplacian but with zero-th order terms for which no Poisson formula is available. To overcome this conceptual difficulty, we provide the following Hopf-type lemma which allows zero-th order terms.

\begin{prop} \label{lem:FLIFu}  Suppose that \(c \in L^\infty(B_1^+)\),  \(u \in  H^s(\R^n) \cap C(B_1)\) is \(x_1\)-antisymmetric, and satisfies \begin{align}
\begin{PDE}
(-\Delta)^su +cu &\geqslant 0 &\text{in } B_1^+, \\
u&\geqslant 0 &\text{in } \R^n_+ ,\\
u&>0 &\text{in } B_1^+. 
\end{PDE} \label{YEL36}
\end{align} Then \begin{align}
\liminf_{h\to 0} \frac{u(he_1)} h > 0 . \label{FLBHT}
\end{align} 
\end{prop}

This result has previously been obtained in \cite[Proposition 2.2]{MR3937999} though the proof uses a different barrier. 

Proposition \ref{lem:FLIFu} establishes that antisymmetric solutions to \eqref{YEL36} must be bounded from below by a positive constant multiple of \(x_1\) close to the origin. At a first glance this is surprising as solutions to \eqref{YEL36} are in general only \(s\)-H\"older continuous up to the boundary, as proven in \cite{MR3168912}. Hence, it would appear more natural to 
consider \(\liminf_{h\to 0} \frac{u(he_1)} {h^s}\) instead of \eqref{FLBHT}, see for example \cite[Proposition 3.3]{MR3395749}. However, the (anti)symmetry of \(u\) means that Proposition~\ref{lem:FLIFu} is better understood as an interior estimate rather than a boundary estimate. 
Indeed, we stress that, differently from the classical Hopf lemma,
Proposition~\ref{lem:FLIFu} is not concerned with the growth of a solution from a boundary point, but mostly with the growth from a reflection point in the antisymmetric setting (notice specifically that~\eqref{FLBHT} provides a linear growth from the origin, which is
an interior point). In this sense, the combination of the antisymmetric geometry
of the solution and the fractional nature of the equation
leads to the two structural differences between Proposition~\ref{lem:FLIFu} and several other Hopf-type lemmata available in the literature for the nonlocal
case, namely the linear (instead of \(s\)-H\"older) growth
and the interior (instead of boundary) type of statement.

A similar result to Proposition~\ref{lem:FLIFu} was obtained in \cite[Theorem 1]{MR3910421} for the entire half-space instead of \(B_1^+\). This entails that~\cite[Theorem 1]{MR3910421} was only applicable (via the method of moving planes) to symmetry problems posed in all of \(\R^n\) whereas our result can be applied to problems in bounded domains. Moreover, \cite[Theorem 1]{MR3910421} is proven by contradiction while the proof of Proposition~\ref{lem:FLIFu}, in a similar vein to the original Hopf lemma, relies on the construction of an appropriate barrier from below, see Lemma \ref{SaBD4}. Hence, using a barrier approach also allows us to obtain a quantitative version of Proposition~\ref{lem:FLIFu} leading to quantitative estimates for the stability of Theorem \ref{CccFw}, which we plan to address in an upcoming paper.

It is also natural ask whether the converse to Theorem~\ref{CccFw} holds. We recall that if $\Omega$ is known to be a ball, i.e., $\Omega:=B_\rho$, then the radial symmetry of a bounded positive solution $u$ to
\begin{equation}\label{eq:Classical Dirichlet}
\begin{PDE}
-\Delta u &= f(u) & \text{in }  B_\rho\\
u&=0 &\text{on } \partial B_\rho
\end{PDE}
\end{equation}
is guaranteed for a large class of nonlinearities $f$. For instance, this is the case for $f$ locally Lipschitz, by the classical result obtained via the method of moving planes by Gidas-Ni-Nirenberg \cite{MR634248,MR544879}. Many extensions of Gidas-Ni-Nirenberg result can be found in the literature, including generalizations to the fractional setting (see, e.g., \cite{MR2114412}).
We also recall that an alternative method pioneered by P.-L. Lions in \cite{MR653200} provides symmetry of nonnegative bounded solutions to \eqref{eq:Classical Dirichlet} for (possibly discontinuous) nonnegative nonlinearities; we refer to \cite{MR3003296,MR1382205,MR2019179,MR4380032} for several generalizations.

\medskip

The paper is organised as follows. In Section \ref{eBxsh} we recall some standard notation, as well as provide some alternate proofs for the weak and strong maximum principle for antisymmetric functions. Moreover, we will prove Theorem \ref{u7dPW} and subsequently prove Corollary \ref{X96XH} and Corollary \ref{KTBle}. In Section \ref{R9GxY}, we will prove Proposition~\ref{lem:FLIFu} and in Section \ref{Svlif} we will prove Theorem \ref{CccFw}. In Appendix \ref{ltalz} we give some technical lemmas needed in the paper. 

\section{Maximum principles and counter-examples} \label{eBxsh}

\subsection{Definitions and notation} \label{9Hmeh}
Let \(n \geqslant 1\) be an integer, \(s\in (0,1)\), and \(\Omega\subset \R^n\) be a bounded open set.  The fractional Sobolev space \(H^s(\R^n)\) is defined as \begin{align*}
H^s(\R^n) = \bigg \{ u \in L^2(\R^n) \text{ s.t. } [ u ]_{H^s(\R^n)} < \infty \bigg \}
\end{align*} where \([ u ]_{H^s(\R^n)}\) is the Gagliardo semi-norm
\begin{align*}
[ u ]_{H^s(\R^n)} &= \bigg ( \int_{\R^n} \int_{\R^n} \frac{\vert u(x)-u(y)\vert^2}{\vert x -y \vert^{n+2s}} \dd x \dd y \bigg )^{\frac12}. 
\end{align*} As usual we identify functions that are equal except on a set of measure zero. It will also be convenient to introduce the space \begin{align*}
\mathcal H^s_0(\Omega) = \{ u \in H^s (\R^n) \text{ s.t. } u = 0 \text{ in } \R^n \setminus \Omega \}. 
\end{align*} Suppose that \(c:\Omega \to \R\) is a measurable function and that \(c\in L^\infty(\Omega)\).  A function \(u \in H^s(\R^n)\) is a \emph{weak} or \emph{generalised} solution of \((-\Delta)^s u +cu =0\) (resp. \(\geqslant 0, \leqslant 0\)) in \(\Omega\) if \begin{align*}
\mathfrak L (u,v) := \frac{c_{n,s}}2 \iint_{\R^n \times \R^n} \frac{(u(x)-u(y))(v(x)-v(y))}{\abs{x-y}^{n+2s}} \dd x \dd y  + \int_{\Omega} c(x) u(x) v(x) \dd x =0 \, (\geqslant 0, \leqslant 0)
\end{align*} for all \(v \in \mathcal H^s_0(\Omega) \), \(v \geqslant 0\). Also, it will be convenient to use the notation \begin{align*}
\mathcal E (u,v) = \frac{c_{n,s}}2 \iint_{\R^n \times \R^n} \frac{(u(x)-u(y))(v(x)-v(y))}{\abs{x-y}^{n+2s}} \dd x \dd y
\end{align*} for each \(u,v \in H^s(\R^n)\). 

A function \(u :\R^n \to \R\) is \emph{antisymmetric} with respect to a plane \(T\) if \begin{align*}
u(Q_T(x)) = -u(x) \qquad \text{for all } x\in \R^n
\end{align*} where \(Q_T:\R^n \to \R^n\) is the reflection of \(x\) across \(T\). A function \(u\) is \emph{\(x_1\)-antisymmetric} if it is antisymmetric with respect to the plane \(T=\{x_1=0\}\). In the case \(T=\{x_1=0\}\), \(Q_T\) is given explicitly by \(Q_T(x) = x-2x_1e_1\). When it is clear from context what \(T\) is, we will also write~\(Q(x)\) or~\(x_\ast\) to mean~\(Q_T(x)\). 

We will also make use of standard notation: the upper and lower half-planes are given by \(\R^n_+= \{ x \in \R^n \text{ s.t } x_1>0\}\) and \(\R^n_-= \{ x \in \R^n \text{ s.t } x_1<0\}\) respectively, and, for all \(r>0\), the half-ball of radius \(r\) is given by \(B_r^+ = \R^n_+ \cap B_r\). We also denote the positive and negative part of a function~\(u\) by~\(u^+(x) = \max \{u(x),0\}\)
and~\(u^-(x) = \max \{-u(x),0\}\). Furthermore, the characteristic function of a set \(A\) is given by \begin{align*}
\chi_A(x) &= \begin{cases}
1, &\text{if } x\in A, \\
0,&\text{if } x\not\in A. 
\end{cases}
\end{align*}

\subsection{Maximum principles}

We now present some results of maximum principle type for antisymmetric functions.

\begin{prop}[A weak maximum principle] \label{aKht4} Suppose that \(\Omega\) is a strict open subset of \(\R^n_+\), and~\(c :\Omega \to \R\) ia a measurable and non-negative function. Let \(u \in H^s(\R^n)\) be \(x_1\)-antisymmetric. If \(u\) satisfies \((-\Delta)^su+cu\leqslant0\) in \(\Omega\) then \begin{align*}
\sup_{\R^n_+} u \leqslant \sup_{\R^n_+\setminus \Omega} u^+ . 
\end{align*} Likewise, if \(u\) satisfies \((-\Delta)^su+cu\geqslant0\) in \(\Omega\) then \begin{align*}
\inf_{\R^n_+} u \geqslant - \sup_{\R^n_+ \setminus \Omega } u^- . 
\end{align*}
\end{prop}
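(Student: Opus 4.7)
Set $M := \operatorname{ess\,sup}_{\R^n_+ \setminus \Omega} u^+$; we may assume $M < +\infty$ (otherwise the conclusion is vacuous), and note $M \geqslant 0$. My plan is to test the weak inequality with $v := (u - M)^+ \chi_{\R^n_+}$ and squeeze $\mathcal{E}(u, v)$ from both sides. The support of $v$ lies in $\overline{\Omega}$, since $v$ vanishes on $\R^n_-$ by construction and on $\R^n_+ \setminus \Omega$ because $u \leqslant M$ there; membership in $\mathcal H^s_0(\Omega)$ then follows from the standard truncation property of $H^s$, using that $x_1$-antisymmetry of $u \in H^s(\R^n)$ forces the trace across $\{x_1 = 0\}$ to vanish (when $s > 1/2$; no trace issue arises when $s \leqslant 1/2$), so that $(u - M)^+$ shares this vanishing trace for $M \geqslant 0$. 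Since $v \geqslant 0$, testing the assumed inequality against $v$ gives $\mathcal{E}(u, v) + \int_\Omega c\, u\, v \, \mathrm{d}x \leqslant 0$, and the zeroth-order piece is nonnegative because $u \geqslant M \geqslant 0$ on $\{v > 0\}$ and $c \geqslant 0$; hence $\mathcal{E}(u, v) \leqslant 0$.

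For the reverse inequality, I would split $v = \tfrac{1}{2}(v + v_\ast) + \tfrac{1}{2}(v - v_\ast)$, with $v_\ast(x) := v(Q_T x)$ for $T := \{x_1 = 0\}$. The symmetric part $v + v_\ast$ pairs to zero against the antisymmetric $u$, because the change of variables $(x, y) \mapsto (Q_T x, Q_T y)$ preserves the kernel $|x - y|^{-n-2s}$ while flipping the sign of $u(x) - u(y)$; hence $\mathcal{E}(u, v) = \tfrac{1}{2}\, \mathcal{E}(u, \hat v)$, with $\hat v := v - v_\ast$ antisymmetric. Folding each of the four quadrants of $\R^n \times \R^n$ onto $\R^n_+ \times \R^n_+$ via reflection then produces the identity
\begin{align*}
\mathcal{E}(u, \hat v) &= c_{n,s} \iint_{\R^n_+ \times \R^n_+} \bigl(K(x, y) - K(x, y_\ast)\bigr)\bigl(u(x) - u(y)\bigr)\bigl((u(x) - M)^+ - (u(y) - M)^+\bigr) \, \mathrm{d}x \, \mathrm{d}y \\
&\quad + 4 c_{n,s} \iint_{\R^n_+ \times \R^n_+} K(x, y_\ast) \, u(x) (u(x) - M)^+ \, \mathrm{d}x \, \mathrm{d}y,
\end{align*}
where $K(x, y) := |x - y|^{-n - 2s}$. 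Both integrands are nonnegative: for $x, y \in \R^n_+$ one has $K(x, y) > K(x, y_\ast) > 0$ (two points in the same half-space are closer to each other than to each other's reflection), $(a - b)\bigl((a - M)^+ - (b - M)^+\bigr) \geqslant 0$ is an elementary truncation inequality, and $u(x) \geqslant M \geqslant 0$ wherever $(u(x) - M)^+ > 0$; therefore $\mathcal{E}(u, v) \geqslant 0$.

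Combining both bounds forces $\mathcal{E}(u, v) = 0$; in particular, the second integral above vanishes, and Fubini together with the strict positivity of $K(x, y_\ast)$ yields $u(x)(u(x) - M)^+ = 0$ for a.e.\ $x \in \R^n_+$, ruling out $u(x) > M$ and giving the first inequality. The second inequality follows immediately by applying what has been proved to $-u$, which is still $x_1$-antisymmetric and satisfies $(-\Delta)^s(-u) + c(-u) \leqslant 0$ in $\Omega$ whenever $u$ is a supersolution. The two points I expect to be the main obstacle are the verification that $v \in \mathcal H^s_0(\Omega)$ (which hinges on the vanishing trace of antisymmetric $H^s$ functions) and the quadrant-folding identity for $\mathcal{E}(u, \hat v)$; the remaining manipulations are routine sign-checks.
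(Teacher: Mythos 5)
Your argument is correct, and the second half takes a genuinely different route from the paper's. Both proofs begin identically: they test against $v := (u-M)^+\chi_{\R^n_+}$ (the paper writes $\ell$ for your $M$), observe $uv\geqslant 0$ so the zeroth-order term has the favourable sign, and deduce $\mathcal E(u,v)\leqslant 0$. The divergence is in the lower bound. The paper works with $w:=u-\ell$ and the pointwise identity $(u(x)-u(y))(v(x)-v(y)) = (v(x)-v(y))^2 + v(x)(v(y)-w(y)) + v(y)(v(x)-w(x))$, which exhibits $\mathcal E(u,v)=\mathcal E(v,v)+R$ with $R\geqslant 0$ after a reflection change of variables; combined with $\mathcal E(u,v)\leqslant 0$ this forces $\mathcal E(v,v)=0$, hence $v$ is a.e.\ constant and, being zero somewhere (or simply lying in $L^2$), vanishes identically. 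You instead exploit the antisymmetric/symmetric orthogonality to replace $v$ by its antisymmetrization $\hat v=v-v_\ast$ at the cost of a factor $\tfrac12$, then fold the four quadrants of $\R^n\times\R^n$ onto $\R^n_+\times\R^n_+$. I checked your resulting identity and it is correct (it is the bilinear form analogue of the pointwise formula \eqref{u9I7E} that the paper uses in the strong maximum principle, Proposition~\ref{fyoaW}): the two integrands are manifestly nonnegative — the first by monotonicity of $t\mapsto(t-M)^+$ together with $K(x,y)>K(x,y_\ast)$ on $\R^n_+\times\R^n_+$, the second because $u>M\geqslant 0$ on $\{v>0\}$ — and the vanishing of the second integral immediately forces $u(u-M)^+=0$ a.e.\ in $\R^n_+$. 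Your route thus bypasses the Poincaré-type step ``$\mathcal E(v,v)=0\Rightarrow v$ constant $\Rightarrow v\equiv 0$'' and reads off the conclusion directly from the structure of the bilinear form; the trade-off is the extra symmetrization bookkeeping, which the paper avoids by manipulating $v$ and $w$ in place.

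One shared soft spot worth noting: neither you nor the paper actually proves that $v=(u-M)^+\chi_{\R^n_+}$ belongs to $\mathcal H^s_0(\Omega)$. You at least flag it and propose a trace argument for $s>1/2$; be careful, though, that $u\in H^s(\R^n)$ antisymmetric does not mean $(u-M)^+$ is already supported in $\R^n_+$ (that would require $u\leqslant 0$ on $\R^n_-$, which is not assumed), so the cut-off $\chi_{\R^n_+}$ is genuinely active and the membership in $H^s(\R^n)$ does rest on a fractional Hardy-type inequality near $\{x_1=0\}$, with the usual caveat at $s=1/2$. This is a gap common to both write-ups, not a defect specific to yours, but if you write this up in full you should supply the Hardy-inequality step (or restrict to $\overline\Omega\subset\R^n_+$, which unfortunately is not the configuration used later in the proof of Theorem~\ref{lem:FLIFu}).
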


In our proof, we use that $c \ge 0$. We refer, e.g., to \cite[Proposition 3.1]{MR3395749} for a weak maximum principle where the non-negativity of $c$ is not required.

\begin{proof}[Proof of Proposition \ref{aKht4}]
It is enough to prove the case \((-\Delta)^su+cu\leqslant0\) in \(\Omega\). Suppose that \(\ell := \sup_{\R^n_+ \setminus \Omega} u^+ <+\infty\), otherwise we are done.

For each \(v\in H^s_0(\Omega)\), \(v \geqslant 0\),  rearranging  \(\mathfrak L (u,v) \leqslant 0\) gives \begin{align}
\mathcal E(u,v) \leqslant - \int_\Omega c(x) u(x) v(x) \dd x \leqslant 0  \label{FQBj4}
\end{align} provided that \(uv \geqslant 0\) in \(\Omega\). 

Let \(w(x):=u(x)-\ell\) and \(v (x) := (u(x)-\ell)^+\chi_{\R^n_+}(x)=w^+(x)\chi_{\R^n_+}(x)\). Note that for all \(x\in \Omega\),\begin{align*}
u(x)v(x) &=  ((u(x)-\ell)^+)^2 + \ell (u(x)-\ell)^+ \geqslant 0. 
\end{align*} Here we used that \((u-\ell)^+(u-\ell)^-=0\). To prove the proposition, it is enough to show that \(v=0\) in \(\R^n\). As \(u(x)-u(y)=w(x)-w(y)\), expanding then using that \(w^+ (x) w^-(x)=0\) gives  \begin{align*}
(u(x)-u(y))(v(x)-v(y)) &= ((v(x)-v(y))^2 +v(x)(v(y)-w(y)) +v(y) (v(x)-w(x))
\end{align*} for all \(x,y \in \R^n\). Hence, \begin{align*}
\mathcal E (u,v) &= \mathcal E (v,v) + c_{n,s} \iint_{\R^n\times \R^n} \frac{v(x)(v(y)-w(y))}{\vert x- y \vert^{n+2s} } \dd x \dd y.
\end{align*} Observe that\begin{align*}
\iint_{\R^n\times \R^n} \frac{v(x)(v(y)-w(y))}{\vert x- y \vert^{n+2s} } \dd x \dd y &= \iint_{\R^n_+\times \R^n_+} \frac{w^+(x)w^-(y)}{\vert x- y \vert^{n+2s} } \dd x \dd y-\iint_{\R^n_-\times \R^n_+} \frac{w^+(x)w(y)}{\vert x- y \vert^{n+2s} } \dd x \dd y.
\end{align*} Making the change of variables \(y \to y_\ast\), where \(y_\ast\) denotes the reflection of \(y\) across the plane \(\{x_1=0\}\), then writing \(w=w^+-w^-\) gives \begin{align*}
\iint_{\R^n\times \R^n} \frac{v(x)(v(y)-w(y))}{\vert x- y \vert^{n+2s} } \dd x \dd y &=\iint_{\R^n_+\times \R^n_+} \frac{w^+(x)w^-(y)}{\vert x- y \vert^{n+2s} } \dd x \dd y+\iint_{\R^n_+\times \R^n_+} \frac{w^+(x)(w(y)+2\ell)}{\vert x- y_\ast \vert^{n+2s} } \dd x \dd y \\
&= \iint_{\R^n_+\times \R^n_+} w^+(x)w^-(y) \bigg ( \frac1{\vert x- y \vert^{n+2s}}-\frac1{\vert x- y_\ast \vert^{n+2s}} \bigg )\dd x \dd y \\
& \qquad + \iint_{\R^n_+\times \R^n_+} \frac{w^+(x)(w^+(y)+2\ell)}{\vert x- y_\ast \vert^{n+2s} } \dd x \dd y \\
&\geqslant 0. 
\end{align*} Thus, \begin{align}
\mathcal E(u,v) \geqslant \mathcal E  (v,v) \geqslant 0 . \label{vndMm}
\end{align} Combining \eqref{FQBj4} and \eqref{vndMm} gives that \(\mathcal E(v,v) =0\) which implies that \(v\) is a constant in \(\R^n\). Since \(v=0\) in \(\R^n_- \), we must have that \(v=0\) as required. 
\end{proof}

\begin{remark}
It follows directly from Proposition \ref{aKht4} that if \(u\) satisfies \((-\Delta)^s u +cu =0\) in \(\Omega\) then \begin{align*}
\sup_{\R^n_+} \vert u \vert \leqslant \sup_{\R^n_+\setminus \Omega} \vert  u \vert . 
\end{align*}
\end{remark}

Next, we prove the strong maximum principle for antisymmetric functions. This result is not new in the literature, see \cite[Corollary 3.4]{MR3395749}; however, Proposition \ref{fyoaW} provides an alternate elementary proof. 

\begin{prop}[A strong maximum principle] \label{fyoaW} Let \(\Omega\subset \R^n_+\) and \(c :\Omega \to \R\). Suppose that \(u :\R^n \to \R\) is \(x_1\)-antisymmetric and \((-\Delta)^su\) is defined pointwise in \(\Omega\). If \(u\) satisfies \((-\Delta)^su+cu \geqslant 0\) in \(\Omega\), and \(u\geqslant 0\) in \(\R^n_+\) then either \(u>0\) in \(\Omega\) or \(u\) is zero almost everywhere in \(\R^n\). 
\end{prop}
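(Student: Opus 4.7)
The plan is to argue by contradiction using the explicit pointwise formula for the fractional Laplacian combined with antisymmetry. Assume $u$ is not identically zero (a.e.) in $\R^n$ but that there exists $x_0 \in \Omega$ with $u(x_0) = 0$. Since $\Omega \subset \R^n_+$ and $u \geqslant 0$ in $\R^n_+$, the point $x_0$ is a global minimum of $u$ on $\R^n_+$. The hypothesis that $(-\Delta)^s u$ is defined pointwise in $\Omega$ lets us write
\begin{align*}
(-\Delta)^s u(x_0) = c_{n,s}\,\PV \int_{\R^n} \frac{u(x_0)-u(y)}{\abs{x_0-y}^{n+2s}} \dd y = -c_{n,s}\, \PV \int_{\R^n} \frac{u(y)}{\abs{x_0-y}^{n+2s}} \dd y.
\end{align*}

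The next step is to exploit antisymmetry to rewrite this as a single integral over $\R^n_+$ with a \emph{positive} kernel. Splitting the integral across $\{x_1=0\}$, performing the change of variables $y \mapsto y_\ast$ on the $\R^n_-$ piece, and using $u(y_\ast)=-u(y)$ gives
\begin{align*}
(-\Delta)^s u(x_0) = -c_{n,s} \int_{\R^n_+} u(y)\left( \frac{1}{\abs{x_0-y}^{n+2s}} - \frac{1}{\abs{x_0-y_\ast}^{n+2s}} \right) \dd y.
\end{align*}
For $x_0,y\in \R^n_+$ we have $\abs{x_0-y}^2 = ((x_0)_1-y_1)^2 + \abs{x_0'-y'}^2 < ((x_0)_1+y_1)^2 + \abs{x_0'-y'}^2 = \abs{x_0-y_\ast}^2$, so the kernel in parentheses is strictly positive on $\R^n_+$; note also that the principal value is no longer needed, since the expression $\tfrac{1}{\abs{x_0-y}^{n+2s}} - \tfrac{1}{\abs{x_0-y_\ast}^{n+2s}}$ is integrable near $x_0$ against a function vanishing there (this will need a brief justification, but it is a routine consequence of $u$ being, say, continuous in a neighborhood of $x_0$ or of the integrability built into the hypothesis that $(-\Delta)^s u(x_0)$ is defined pointwise).

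Since $u \geqslant 0$ in $\R^n_+$, the above display gives $(-\Delta)^s u(x_0) \leqslant 0$. On the other hand, the differential inequality evaluated at $x_0$ together with $u(x_0)=0$ yields $(-\Delta)^s u(x_0) \geqslant -c(x_0)u(x_0) = 0$. Therefore both sides vanish, i.e.
\begin{align*}
\int_{\R^n_+} u(y)\left( \frac{1}{\abs{x_0-y}^{n+2s}} - \frac{1}{\abs{x_0-y_\ast}^{n+2s}} \right) \dd y = 0.
\end{align*}
Since the kernel is strictly positive on $\R^n_+$ and $u \geqslant 0$ there, this forces $u=0$ almost everywhere in $\R^n_+$, and then by antisymmetry $u=0$ almost everywhere in $\R^n$, contradicting our assumption.

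The main obstacle is the careful handling of the principal value in the pointwise representation and the verification that the reflection argument produces an absolutely convergent integral with a manifestly positive kernel; once that reflection identity is in place, the sign analysis and the dichotomy follow immediately. No boundedness of $c$, nor any sign condition on $c$, is needed because the term $c(x_0)u(x_0)$ vanishes at the minimum point.
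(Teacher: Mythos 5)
Your proof is correct and is essentially the same argument as the paper's: both evaluate the pointwise formula at the zero $x_0$ (the paper's $x_\star$), rewrite $(-\Delta)^s u(x_0)$ via the reflection $y\mapsto y_\ast$ as an integral over $\R^n_+$ of $u$ against the strictly positive kernel $\abs{x_0-y}^{-(n+2s)}-\abs{x_0-y_\ast}^{-(n+2s)}$, conclude $(-\Delta)^s u(x_0)\leqslant 0$, and pair this with $(-\Delta)^s u(x_0)+c(x_0)u(x_0)\geqslant 0$ where the zero-th order term drops out, forcing equality and hence $u\equiv 0$ a.e. in $\R^n_+$. The paper simply specialises its general antisymmetric representation formula (its equation for $(-\Delta)^s u(x)$ over $\R^n_+$) to the point where $u$ vanishes, while you derive the same identity directly from the definition; the paper also retains the $\PV$ notation throughout rather than arguing it can be dropped, which sidesteps the brief justification your parenthetical remark acknowledges is needed, but this is a cosmetic difference.
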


\begin{remark}Note that in Proposition~\ref{fyoaW}, as in \cite[Corollary 3.4]{MR3395749}, no sign assumption is required on \(c\). The reason Proposition~\ref{fyoaW} holds without this assumption is due to the sign assumption on \(u\) which is not usually present in the statement of the strong maximum principle.
\end{remark}

\begin{proof}[Proof of Proposition~\ref{fyoaW}]
Due to the antisymmetry of \(u\), by a change of variables we may write \begin{align}
\begin{aligned}
(-\Delta)^s u(x) &= c_{n,s} \PV \int_{\R^n_+} \bigg ( \frac 1 {\vert x - y \vert^{n+2s}} - \frac 1 {\vert x_\ast - y \vert^{n+2s}} \bigg ) (u(x)-u(y)) \dd y  \\
&\qquad \qquad + 2c_{n,s}\int_{\R^n_+} \frac{u(x)}{\vert x_\ast - y \vert^{n+2s}} \dd y
\end{aligned} \label{u9I7E}
\end{align} where \(x_\ast=x-2x_1e_1\) is the reflection of \(x\) across \(\{x_1=0\}\). Suppose that there exists \(x_\star\in \Omega\) such that \(u(x_\star)=0\). On one hand,  \eqref{u9I7E} implies that \begin{align}
(-\Delta)^s u(x_\star) &= -c_{n,s} \PV \int_{\R^n_+} \bigg ( \frac 1 {\vert x_\star - y \vert^{n+2s}} - \frac 1 {\vert (x_\star)_\ast - y \vert^{n+2s}} \bigg )u(y) \dd y \label{XXP2e}
\end{align} where we used that \(u\) is antisymmetric. Since \begin{align*}
\frac 1 {\vert x_\ast - y \vert^{n+2s}} < \frac 1 {\vert x - y \vert^{n+2s}}, \qquad \text{for all } x,y\in \R^n_+, \, x\neq y,
\end{align*} equation \eqref{XXP2e} implies that \((-\Delta)^s u(x_\star) \leqslant 0\) with equality if and only if \(u\) is identically zero almost everywhere in \(\R^n_+\). On the other hand, \((-\Delta)^su(x_\star) = (-\Delta)^s u(x_\star) +c(x_\star) u(x_\star) \geqslant 0 \). Hence, \((-\Delta)^su(x_\star)=0\) so \(u\) is identically zero a.e. in \(\R^n_+\).
\end{proof}

\subsection{Counter-examples}
The purpose of this subsection is to provide counter-examples to the classical Harnack inequality and the strong maximum principle for antisymmetric \(s\)-harmonic functions. A useful tool to construct such functions is the following antisymmetric analogue of Theorem 1 in \cite{MR3626547}. This proves that `all antisymmetric functions are locally antisymmetric and \(s\)-harmonic up to a small error'. 

\begin{proof}[Proof of Theorem \ref{u7dPW}]
Due to \cite[Theorem 1.1]{MR3626547} there exist \(R>1\) and \(v\in H^s (\R^n) \cap C^s(\R^n)\) such that \((-\Delta)^sv=0\) in \(B_1\), \(v=0\) in \(\R^n \setminus B_R\), and \( \| v - f\|_{C^k(B_1)} < \varepsilon \). In fact, by a mollification argument, we may take \(v\) to be smooth, see Remark \ref{BQkXm} here below.

Let \(x_\ast\) denote the reflection of \(x\) across \( \{ x_1=0 \}\), and \begin{align*}
u(x) = \frac12 \big ( v(x) - v(x_\ast) \big ) . 
\end{align*} It is easy to verify that \((-\Delta)^s u = 0\) in \(B_1\) (and of course \(u=0\) in \(\R^n \setminus B_R\)). By writing \begin{align*}
u(x) -f(x) &= \frac12 \big (v(x) - f(x) \big ) + \frac12 \big ( f(x_\ast) - v(x_\ast) \big ) \qquad\text{for all } x \in B_1
\end{align*} we also obtain \begin{equation*}
\| u - f\|_{C^k(B_1)} \leqslant \| v - f\|_{C^k(B_1)} < \varepsilon . \qedhere
\end{equation*}
\end{proof}

\begin{remark} \label{BQkXm}
Fix \(\varepsilon>0\). If \(f \in C^k (\overline{B_1} )\) there exists \(\mu >0\) such that \(f\in C^k(B_{1+2\mu})\). Rescaling then applying \cite[Theorem 1.1]{MR3626547}, there exist \(\tilde R>1+\mu\) and \(\tilde v\in H^s (\R^n) \cap C^s(\R^n)\) such that \((-\Delta)^s \tilde v=0\) in \(B_{1+\mu}\), \(\tilde v=0\) in \(\R^n \setminus B_{\tilde R }\), and \( \| \tilde v - f\|_{C^k(B_{1+\mu})} < \varepsilon/2 \).

Let \(\eta \in C^\infty_0(\R^n)\) be the standard mollifier \begin{align*}
\eta (x) &= \begin{cases}
C e^{- \frac 1 {\vert x \vert^2-1}} , &\text{if } x \in B_1 ,\\
0, &\text{if } x \in \R^n \setminus B_1,
\end{cases}
\end{align*} with \(C>0\) chosen so that \(\int_{\R^n} \eta(x) \dd x =1\). For each \(\delta>0\), let \(\eta_\delta(x) = \delta^{-n} \eta (x/\delta))\) and  \begin{align*}
\tilde{v}^{(\delta)}(x) := (\tilde v \ast \eta_\delta)(x) = \int_{\R^n} \eta_\delta (x-y) \tilde v (y) \dd y. 
\end{align*} By the properties of mollifiers, \begin{align*}
\| \tilde v^{(\delta)} - f \|_{C^k(B_1)} \leqslant \| \tilde v^{(\delta)} - \tilde v \|_{C^k(B_1)} + \| \tilde v - f \|_{C^k(B_1)} < \varepsilon
\end{align*} provided \(\delta\) is sufficiently small. Since \(\tilde v \in H^s(\R^n) \subset L^2(\R^n)\), it follows from \cite[Proposition 4.18]{MR2759829} that \(\supp \tilde{v}^{(\delta)}(x) \subset  \overline{B_{\tilde{R}+\delta}}\). Moreover, via the Fourier transform, \((-\Delta)^s \tilde{v}^{(\delta)} = \big ( (-\Delta)^s \tilde{v} \big ) \ast \eta_\delta\), so \cite[Proposition 4.18]{MR2759829} again implies that \( \supp \big ( (-\Delta)^s \tilde{v}^{(\delta)}\big ) \subset \R^n \setminus B_{1+\mu-\delta}\).

Hence, setting \(R=\tilde R+\delta\), \(v=\tilde v^{(\delta)}\), we have constructed a function \(v  \in C^\infty_0(\R^n)\) such that \(v\) is \(s\)-harmonic in \(B_1\), \(v=0\) outside \(B_R\) and \(\| v - f \|_{C^k(B_1)}<\varepsilon\). Moreover, 
$$ v(x_\ast) = \int_{\R^n} \eta_\delta (y) \tilde v (x_\ast- y) \dd y =-\int_{\R^n} \eta_\delta (y) \tilde v (x- y_\ast) \dd y =-v(x)$$ via the change of variables \(z=y_\ast\) and using that \(\eta_\delta(y_\ast) = \eta_\delta (y)\). 
\end{remark}

It is well known that the classical Harnack inequality fails for \(s\)-harmonic functions, see \cite{Kassmann2007clas} for a counter-example. The counter-example provided in \cite{Kassmann2007clas} is not antisymmetric; however, Corollary \ref{X96XH} proves that even with this extra symmetry the classical Harnack inequality does not hold for \(s\)-harmonic functions. Moreover, the construction of the counter-example in Corollary \ref{X96XH} is entirely different to the one in \cite{Kassmann2007clas}. 
 
 \begin{proof}[Proof of Corollary \ref{X96XH}] We will begin by proving the case \(n=1\). Suppose that \(\varepsilon\in (0,1)\), \(I'=(1,2)\),  and \(I=(1/2,5/2)\). Let \(\{ f^{(\varepsilon)}\} \subset C^\infty (\R)\) be a family of odd functions that depend smoothly on the parameter \(\varepsilon\). Moreover, suppose that \begin{align}
 \sup_{I'} f^{(\varepsilon)} = 4 \label{UJd6j}
\end{align}  and \begin{align}
\inf_{I} f^{(\varepsilon)} =\inf_{I'} f^{(\varepsilon)} =2\varepsilon . \label{BR8WJ}
\end{align}  For example, such a family of functions is \begin{align*}
 f^{(\varepsilon)}(x) =  ax +bx^3+cx^5 +dx^7 
 \end{align*} where \begin{align*}
 a = \frac 5{54} (64+5\varepsilon), \quad b= -\frac 1{72}(128+73\varepsilon), \quad c=\frac 1 {36} (-8+23\varepsilon), \quad d=\frac 1 {216}(16-19\varepsilon).
 \end{align*} The functions \(f^{(\varepsilon)}\) are plotted for several values of \(\varepsilon\) in Figure \ref{2FAHv}. 
 
 After a rescaling, it follows from Theorem \ref{u7dPW} that there exists a family \(v^{(\varepsilon)}\) of odd functions that are \(s\)-harmonic in \((-5/2,5/2)\supset I \) and satisfy \begin{align}
 \| v^{(\varepsilon)} - f^{(\varepsilon)} \|_{C((-5/2,5/2))} < \varepsilon . \label{pA7Na}
 \end{align} It follows from \eqref{BR8WJ} and \eqref{pA7Na} that \begin{align*}
\inf_{I} v^{\varepsilon}  \geqslant \varepsilon>0 . 
\end{align*}  Moreover, from \eqref{UJd6j}-\eqref{BR8WJ} and \eqref{pA7Na} , we have that \begin{align*}
\sup_{I'} v^{(\varepsilon)}\geqslant 4-\varepsilon \qquad \text{and} \qquad \inf_{ I'} v^{(\varepsilon)}\leqslant   3\varepsilon . 
\end{align*} Hence, \begin{align*}
\frac{\sup_{I'} v^{(\varepsilon)}}{\inf_{I'} v^{(\varepsilon)}} \geqslant \frac{4-\varepsilon}{3\varepsilon} \to + \infty
\end{align*} as \(\varepsilon \to 0^+\). Setting \(\Omega=I\) and \(\Omega’=I’\) proves the statement in the case \(n=1\).

To obtain the case \(n >1\), set \(\Omega'=I'\times (-1,1)^{n-1}\), \(\Omega=I \times (-2,2)^{n-1}\), and \(u^{(\varepsilon)} (x) = v^{(\varepsilon)} (x_1) \). Using that \begin{align*}
(-\Delta)^s u^{(\varepsilon)} (x) = C (-\Delta)^s_{\R} v^{(\varepsilon)} (x_1) \qquad \text{for all } x\in \R^n 
\end{align*}where \((-\Delta)^s_{\R} \) denotes the fractional Laplacian in one dimension and \(C\) is some constant, see \cite[Lemma 2.1]{MR3536990}, all of the properties of \(v^{(\varepsilon)} \) carry directly over to \(u^{(\varepsilon)} \). 
 \end{proof}

\begin{figure}[h]
\centering
\includegraphics[scale=1]{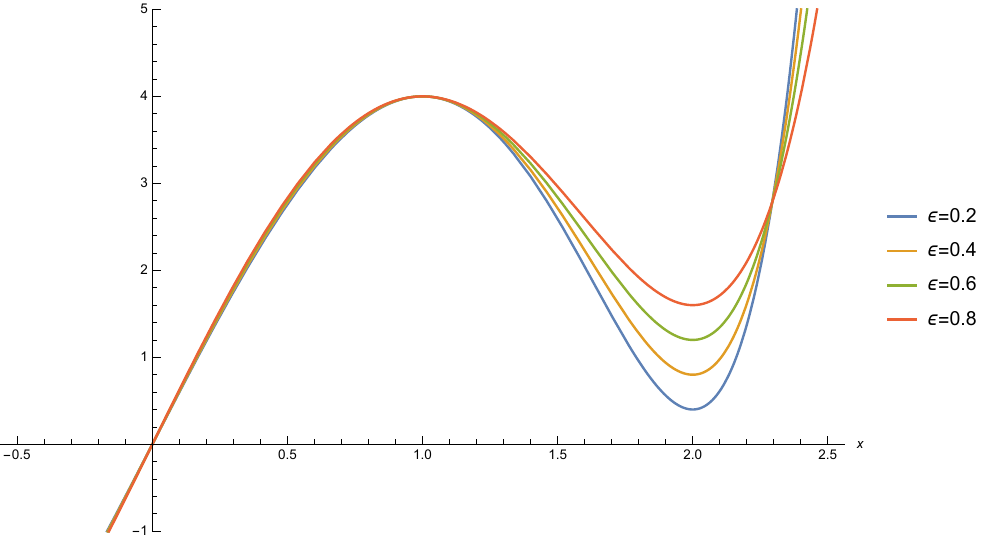} 
\caption{Plot of \(f^{(\varepsilon)}(x) \) for \(\varepsilon=0.2,0.4,0.6,0.8\).}
\label{2FAHv}
\end{figure}

Observe that the family of functions in the proof of Corollary \ref{X96XH} does not violate the classical strong maximum principle. In our next result, Corollary \ref{KTBle}, we use Theorem \ref{u7dPW} to provide a counter-example to the classical strong maximum principle for antisymmetric \(s\)-harmonic functions. This is slightly more delicate to construct than the counter-example in Corollary \ref{X96XH}. Indeed, Theorem \ref{u7dPW} only gives us an \(s\)-harmonic function \(u\) that is \(\varepsilon\)-close to a given function \(f\), so how can we guarantee that \(u=0\) at some point but \(u\) is not negative? 

The idea is to begin with an antisymmetric function \(f\) that has a non-zero minimum away from zero, see Figure \ref{73Iyk}, use Theorem \ref{u7dPW} to get an antisymmetric \(s\)-harmonic function with analogous properties then minus off a known antisymmetric \(s\)-harmonic function until the function touches zero. The proof of Corollary \ref{KTBle} relies on two technical results, Lemma \ref{IyXUO} and Lemma \ref{oujju}, both of which are included in Appendix \ref{ltalz}.

\begin{proof}[Proof of Corollary \ref{KTBle}] By the same argument at the end of the proof of Corollary~\ref{X96XH}, it is enough to take \(n=1\). Let \(\Omega=(0,3)\) and \(f :\R \to \R\) be a smooth, odd function such that \begin{align}
f(x) &\geqslant 1, \text{ for all } x\in [1,3];  \label{cv7Fh}\\
f(x) &\geqslant 3x,\text{ for all } x\in [0,1];  \label{iHz0V}\\
f(2) & =1; \text{ and }  \label{QEb8m} \\
f(3)&= 5. \label{ugncP}
\end{align} For example, such a function is  \begin{align*}
f(x) = -\frac{371 }{43200}x^9+\frac{167 }{1440}x^7-\frac{2681 }{14400}x^5-\frac{4193 }{2160}x^3+\frac{301 }{50} x. 
\end{align*} See Figure \ref{73Iyk} for a plot of \(f\).

By Theorem \ref{u7dPW} with \(\varepsilon =1\), there exists an odd function \(v \in C^\infty_0(\R)\) such that \(v\) is \(s\)-harmonic in~\((-4,4)\) and \begin{align}
\| v-f\|_{C^1((-4,4))} < 1 . \label{wjbkE}
\end{align} Let \(c_0,\zeta_R\) be as in Lemma \ref{oujju} and set \(\zeta := \frac 1 {c_0} \zeta_R. \) By choosing \(R>3\) sufficiently large, we have that  \begin{align}
\frac 3 4 x \leqslant \zeta (x) \leqslant \frac 5 4 x \qquad \text{in }(0,4) . \label{rU1A4}
\end{align} Define \begin{align*}
\phi_t(x) = v(x) - t \zeta (x), \qquad {\mbox{for all~$x\in(-3,3)$ and~$t >0$.}}
\end{align*} We have that, for all \(t>0\), \begin{align*}
(-\Delta)^s \phi_t(x) = 0  \qquad \text{in } (-3,3)
\end{align*} where \((-\Delta)^s=(-\Delta)^s_x \) is the fractional Laplacian with respect to \(x\), and \begin{align*}
x \mapsto \phi_t(x) \text{ is odd.} 
\end{align*} As in Lemma \ref{IyXUO}, let \begin{align*}
m(t) = \min_{x \in [1,3]} \phi_t(x) . 
\end{align*}  From \eqref{cv7Fh} and \eqref{wjbkE} we have that \( \phi_0(x) = v(x) > 0\) in \([1,3]\). Hence, \begin{align*}
m(0) >0. 
\end{align*} Moreover, by \eqref{QEb8m} and \eqref{wjbkE}, \(v(2)<2\). It follows from \eqref{rU1A4} that \begin{align*}
m(4/3) \leqslant \phi_{4/3}(2)=v(2) - \frac 4 3 \zeta (2) <0. 
\end{align*} Since \(m\) is continuous, as shown in Lemma \ref{IyXUO}, the intermediate value theorem implies the existence of \(t_\star \in (0,4/3)\) such that \begin{align*}
m(t_\star) = 0. 
\end{align*} Let \(u := \phi_{t_\star}\). By construction \(u \geqslant 0\) in \([1,3]\). Moreover, since \(u\) is continuous, there exists some \(x_\star \in [1,3]\) such that \(u(x_\star)=0\). In fact, we have that \(x_\star \neq 3\). Indeed, \eqref{ugncP} implies that \(v(3)>4\) and, since \(t_\star < 4/3\) and \(\zeta (3) <9/4\), we obtain \(u(3) >0\). 

All that is left to be shown is that \(u \geqslant 0\) in \((0,1)\). By \eqref{wjbkE}, we have that \begin{align*}
v'(x) > f'(x) - 1, \qquad {\mbox{for all }} x \in (-4,4). 
\end{align*} Since \(v(0)=f(0)=0\), it follows from the fundamental theorem of calculus that \begin{equation}\begin{split}
u(x) &= \int_0^x v'(\tau ) \dd \tau -t_\star \zeta (x) \\
&\geqslant  \int_0^x \big ( f'(\tau)-1 \big ) \dd \tau  - t_\star \zeta (x) \\
&=f(x)-x - t_\star \zeta (x) \label{A0ZNc}
\end{split}\end{equation} provided that~\(x\in (0,3)\). By \eqref{A0ZNc}, \eqref{iHz0V} and \eqref{rU1A4}, \begin{align*}
u(x) \geqslant  \frac 1 3 x \geqslant 0 \qquad \text{in } [0,1] ,
\end{align*}
as desired.
\end{proof}

\begin{figure}[h]
\centering
\includegraphics[scale=1]{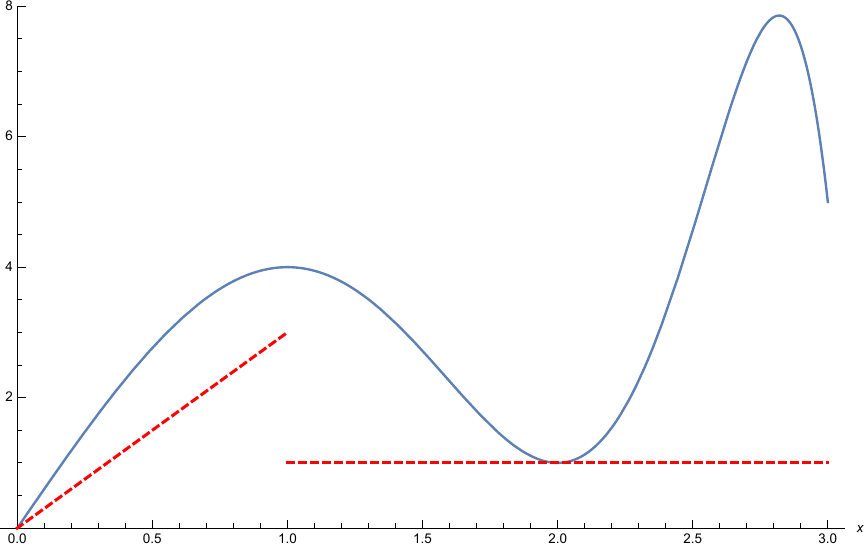} 
\caption{Plot of \(f \) as in the proof of Corollary \ref{KTBle}.}\label{73Iyk}
\end{figure}

\section{A Hopf lemma} \label{R9GxY}
In this section, we prove Proposition~\ref{lem:FLIFu}. The main step in the proof is the construction of the following barrier. 

\begin{lem} \label{SaBD4}
Suppose that \(c \in L^\infty (B_2^+)\). Then there exists an \(x_1\)-antisymmetric function \(\varphi \in C^\infty(\R^n)\) such that \begin{align*}
\begin{PDE}
(-\Delta)^s \varphi +c \varphi &\leqslant 0 &\text{in }B_2^+\setminus B_{1/2}(e_1), \\
\varphi &=0 &\text{in } \R^n_+ \setminus B_2^+ ,\\
\varphi &\leqslant 1 &\text{in } B_{1/2}(e_1),\\
\partial_1 \varphi (0) &>0 . 
\end{PDE}
\end{align*} 
\end{lem}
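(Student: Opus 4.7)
The plan is to construct the barrier $\varphi$ as a two-parameter combination
\[
\varphi(x) = A\, \tilde\psi(x) + \varepsilon\, \rho(x),
\]
where $\tilde\psi$ is a smooth $x_1$-antisymmetric ``cap'' supported inside $B_{1/4}(e_1)\cup B_{1/4}(-e_1)$, taking values in $[0,1]$ on $\R^n_+$ and equal to $1$ on $B_{1/8}(e_1)$, while $\rho(x) = x_1\,\beta(|x|)$ is a ``linear kernel'', with $\beta$ a smooth radial cutoff equal to $1$ near the origin and vanishing outside $[0,2)$. Concretely I would take $\tilde\psi(x) = \psi(x) - \psi(x_\ast)$ for a smooth non-negative bump $\psi$ supported in $B_{1/4}(e_1)$ with $\psi \equiv 1$ on $B_{1/8}(e_1)$. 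The role of $\tilde\psi$ is to generate a strongly negative $(-\Delta)^s \tilde\psi$ on the complement of its support, which will drive the super-solution inequality; the role of $\rho$ is to contribute a non-zero normal derivative at the origin, since $\tilde\psi$ vanishes identically in a neighborhood of $0$ and thus $\partial_1 \varphi(0) = \varepsilon\,\beta(0) = \varepsilon > 0$.

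Antisymmetry, smoothness, compact support inside $B_2$, and the sign of $\partial_1\varphi(0)$ are then immediate. The content of the lemma is the PDE inequality on $B_2^+ \setminus B_{1/2}(e_1)$. Since $\tilde\psi \equiv 0$ there, splitting the integral over $\R^n_\pm$ and folding the lower half onto the upper half by antisymmetry yields
\[
(-\Delta)^s \tilde\psi(x) = -c_{n,s}\int_{B_{1/4}(e_1)} \psi(y)\, K(x,y)\,\dd y, \qquad K(x,y) := \frac{1}{|x-y|^{n+2s}} - \frac{1}{|x-y_\ast|^{n+2s}}.
\]
The identity $|x - y_\ast|^2 - |x - y|^2 = 4 x_1 y_1$ combined with the mean value theorem applied to $t \mapsto t^{-(n+2s)/2}$ produces a uniform lower bound $K(x,y) \geq c_3 x_1$, with $c_3 > 0$ depending only on $n$ and $s$. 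Integrating against $\psi \geq \chi_{B_{1/8}(e_1)}$ yields $(-\Delta)^s \tilde\psi(x) \leq -c_2 x_1$ on $B_2^+ \setminus B_{1/2}(e_1)$. On the other hand, because $\rho$ is smooth, compactly supported, and antisymmetric, both $\rho$ and $(-\Delta)^s\rho$ vanish on $\{x_1 = 0\}$, so a mean value estimate gives $|(-\Delta)^s \rho(x)| + |\rho(x)| \leq C_\rho\, x_1$ on $\overline{B_2^+}$. Combining these bounds,
\[
(-\Delta)^s \varphi(x) + c(x)\varphi(x) \leq x_1 \bigl(-A c_2 + \varepsilon (C_\rho + \|c\|_{L^\infty(B_2^+)})\bigr),
\]
which is non-positive as soon as $A$ is chosen large enough in terms of $\varepsilon$. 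The normalization $\varphi \leq 1$ on $B_{1/2}(e_1)$ is then enforced by taking $\varepsilon$ (and hence $A$) sufficiently small, using that $|\tilde\psi| \leq 1$ and $|\rho| \leq 2$ on that ball.

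The main obstacle is precisely the $x_1$-\emph{linear} lower bound for $-(-\Delta)^s \tilde\psi(x)$: both this quantity and the potential term $c\varphi$ vanish on $\{x_1 = 0\}$, so a plain estimate $-(-\Delta)^s \tilde\psi \geq c_2$ is false there. Extracting the correct first-order behavior requires recognizing the factor $4 x_1 y_1$ in the difference of squared distances and exploiting $y_1 \geq 7/8$ on the subset of $B_{1/4}(e_1)$ where $\psi \equiv 1$. Balancing this linear lower bound against the matching linear upper bound that comes from smoothness of $\rho$ and $L^\infty$-boundedness of $c$ is what forces the two-parameter form of the barrier and dictates the order in which $A$ and $\varepsilon$ must be chosen.
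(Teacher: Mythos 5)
Your construction is essentially the paper's: both build $\varphi$ as a superposition of a smooth antisymmetric bump supported near $e_1$ whose fractional Laplacian satisfies $(-\Delta)^s(\text{bump})\leqslant -Cx_1$ on $B_2^+\setminus B_{1/2}(e_1)$ via the kernel-difference estimate coming from $|x_\ast-y|^2-|x-y|^2=4x_1y_1$, plus a smooth antisymmetric ``$x_1$-like'' term carrying the positive normal derivative at the origin and contributing only $O(x_1)$ to $(-\Delta)^s\varphi+c\varphi$, and both then balance the two coefficients against $\|c\|_{L^\infty}$. The paper uses a single large parameter $\alpha$ in $\varphi=\zeta+\alpha\eta$ rather than your $(A,\varepsilon)$, and glosses over the normalization $\varphi\leqslant 1$ on $B_{1/2}(e_1)$ (harmless, since only boundedness is used later), which you handle explicitly by scaling.
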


\begin{proof}
Let \(\zeta\) be a smooth \(x_1\)-antisymmetric cut-off function such that the support of \(\zeta\) is contained in \(B_2\),  \(\zeta \geqslant 0\) in \(B_2^+\), and \(\partial_1\zeta (0)>0\). For example, such a function is \( x_1\eta(x)\) where \(\eta\) is the standard mollifier defined in Remark \ref{BQkXm}. Since \(\zeta\) is smooth with compact support, we have that~\((-\Delta)^s \zeta \in C^\infty (\R^n)\). 

Moreover, \((-\Delta)^s \zeta \) is \(x_1\)-antisymmetric since \(\zeta\) is \(x_1\)-antisymmetric, so it follows that there exists~\(C>0\) such that \begin{align}
(-\Delta)^s \zeta (x) +c \zeta (x) \leqslant C(1+\| c \|_{L^\infty(B_2^+)}) x_1 \qquad \text{in } B_2^+  \label{I6ftM}
\end{align} using that \(c\in L^\infty(B_2^+)\).

Next, let \(\tilde \zeta \) be a smooth \(x_1\)-antisymmetric function such that \(\tilde \zeta  \equiv 1\) in \(B_{1/4}(e_1)\), \(\tilde \zeta  \equiv 0\) in \(\R^n_+ \setminus B_{3/8}(e_1)\), and \(0\leqslant \tilde \zeta  \leqslant 1\) in \(\R^n_+\). Recall that, given an \(x_1\)-antisymmetric function \(u\), the fractional Laplacian of \(u\) can be written as \begin{align*}
(-\Delta)^su(x) &= c_{n,s} \int_{\R^n_+} \bigg ( \frac 1 {\vert x - y\vert^{n+2s}} -  \frac 1 {\vert x_\ast -y\vert^{n+2s}} \bigg ) (u(x)-u(y) ) \dd y + 2 c_{n,s} u(x) \int_{\R^n_+} \frac{\dd y } {\vert x_\ast - y \vert^{n+2s}}.
\end{align*} Hence, for each \(x\in B_2^+ \setminus B_{1/2}(e_1)\),  \begin{align*}
(-\Delta)^s \tilde \zeta (x)&=-c_{n,s}  \int_{B_{3/8}(e_1)} \bigg ( \frac 1 {\vert x - y \vert^{n+2s}} - \frac 1 {\vert x_\ast - y \vert^{n+2s}} \bigg ) \tilde \zeta (y) \dd y.
\end{align*} By the fundamental theorem of calculus, for all \(x\in B_2^+ \setminus B_{1/2}(e_1)\) and~\(y\in B_{3/8}(e_1)\),\begin{align*}
\frac 1 {\vert x - y \vert^{n+2s}} - \frac 1 {\vert x_\ast - y \vert^{n+2s}} &= \frac{n+2s}{2} \int_{\vert x - y \vert^2}^{\vert x_\ast - y \vert^2} \frac{ \dd \tau } {\tau^{\frac{n+2s+2}{2}}}\\
&\geqslant \frac{Cx_1y_1}{\vert x_\ast - y \vert^{n+2s+2}} \\
&\geqslant Cx_1  
\end{align*} with \(C\) depending only on \(n\) and \(s\). Hence, \( (-\Delta)^s\tilde \zeta (x) \leqslant -Cx_1\) in \( B_2^+ \setminus B_{1/2}(e_1)\). Then the required function is given by \(\varphi(x) := \zeta (x) + \alpha \tilde \zeta (x)\) for all \(x\in \R^n\) with \(\alpha>0\) to be chosen later. Indeed, from~\eqref{I6ftM}, we have that \begin{align*}
(-\Delta)^s \varphi +c \varphi \leqslant C (1-\alpha+ \| c \|_{L^\infty(B_2^+)})x_1 \leqslant 0
\qquad\text{in } B_2^+ \setminus B_{1/2}(e_1)
\end{align*} provided that \(\alpha \) is large enough. 
\end{proof}

{F}rom Lemma \ref{SaBD4} the proof of Proposition~\ref{lem:FLIFu} follows easily:

\begin{proof}[Proof of Proposition~\ref{lem:FLIFu}]
Since \(u >0\) in \(B_1^+\), we have that,
for all \(v\in \mathcal H^s_0(B_1^+)\) with~\(v\geqslant0\), \begin{align*}
0\leqslant \mathcal E(u,v) + \int_{B_1^+} c(x) u(x) v(x) \dd x \leqslant\mathcal E(u,v) + \int_{B_1^+} c^+(x) u(x) v(x) \dd x
\end{align*} and thereofore \begin{align*}
(-\Delta)^su +c^+u \geqslant 0 \qquad \text{in } B_1^+.
\end{align*} Hence, it suffices to prove Proposition~\ref{lem:FLIFu} with \(c \geqslant 0\). Let \(\rho>0\) be such that \(B_{2\rho} \subset B_1\) and  \(\varphi_\rho (x) = \varphi(x/\rho)\) where \(\varphi\) is as in Lemma \ref{SaBD4}. Provided that \(\varepsilon\) is sufficiently small, we have \begin{align*}
(-\Delta)^s(u-\varepsilon \varphi_\rho) +c(u-\varepsilon \varphi_\rho)\geqslant 0  \qquad \text{in }  B_{2\rho} \setminus B_{\rho/2}(\rho e_1)
\end{align*} and \(u-\varepsilon \varphi_\rho\geqslant 0\) in \((\R^n_+\setminus B_{2\rho})\cup B_{\rho/2}(\rho e_1)\). It follows from Proposition~\ref{aKht4} that \begin{align*}
u \geqslant \varepsilon \varphi_\rho \qquad \text{in } \R^n_+
\end{align*} where we used that \(c\geqslant0\). Since \(u(0)=\varphi_\rho(0)=0\), we conclude that \begin{align*}
\liminf_{h\to 0} \frac{u(he_1)}{h} \geqslant \varepsilon \partial_1 \varphi_\rho(0) >0,
\end{align*}
as desired.
\end{proof}

\section{Symmetry for the semilinear fractional parallel surface problem}\label{Svlif}

In this section, we will give the proof of Theorem \ref{CccFw}. For simplicity and the convenience of the reader, we will first state the particular case of Proposition~3.1 in \cite{MR3395749}, which we make use of several times in the proof of Theorem \ref{CccFw}. Note that \(c\) in Proposition \ref{QawmgWdG} corresponds to \(-c\) in \cite{MR3395749}---we made this change so that the notation of Proposition \ref{QawmgWdG} would agree with the notation in the proof of Theorem \ref{CccFw}.

\begin{prop}[Proposition~3.1 in \cite{MR3395749}] \label{QawmgWdG} Let \(H\) be a halfspace, \(\Omega \subset H\) be any open, bounded set, and \(c\in L^\infty (\Omega)\) be such that \(-c \leqslant c_\infty<\lambda_1(\Omega)\) in \(\Omega\) for some \(c_\infty \geqslant 0\),
where~\(\lambda_1(\Omega)\) is the first Dirichlet eigenvalue of \((-\Delta)^s\) in \(\Omega\). If \(u \in H^s(\R^n)\) satisfies \((-\Delta)^s u +cu \geqslant 0\) in \(\Omega\) and \(u\) is antisymmetric in \(\R^n\) then \(u \geqslant 0\) almost everywhere in \(\Omega\). 
\end{prop}

Now we will prove Theorem \ref{CccFw} in the case \(n\geqslant2\). We prove the case \(n=1\) later in the section.

\begin{proof}[Proof of Theorem \ref{CccFw} for \(n\geqslant2\)] 
Fix a direction \( e \in \Sph^{n-1}\). Without loss of generality, we may assume that~\(e=e_1\). Let \(T_\lambda = \{x\in \R^n \text{ s.t. } x_1 = \lambda\}\)---this will be our `moving plane' which we will vary by decreasing the value of \(\lambda\). Since \(\Omega\) is bounded, we may let \(M = \sup_{x\in \Omega}x_1\) which is the first value of \(\lambda\) for which \(T_\lambda\) intersects \(\overline{\Omega}\). Moreover, let \(H_\lambda = \{ x \in \R^n \text{ s.t. } x_1 >\lambda\}\), \(\Omega_\lambda = H_\lambda \cap \Omega\), and \(Q_\lambda:\R^n \to \R^n\) be given by \(x \mapsto x- 2x_1+2\lambda e_1\). Geometrically, \(Q_\lambda(x)\) is the reflection of \(x\) across the hyperplane~\(T_\lambda\). 

Since \(\partial \Omega \) is \(C^1\) there exists some \(\mu <M\) such that \(\Omega_\lambda' := Q_\lambda(\Omega_\lambda) \subset \Omega\) for all \(\lambda\in (\mu,M)\), see for example \cite{MR1751289}. Let \(m\) be the smallest such \(\mu\), that is, let \begin{align*}
m = \inf \{ \mu<M \text{ s.t. } \Omega_\lambda' \subset \Omega \text{ for all } \lambda \in  (\mu,M)\}.
\end{align*} Note that \(\Omega_m' \subset \Omega\). Indeed, if this were not the case then there would exist some \(\varepsilon>0\) such that \(\Omega_{m+\varepsilon}' \not\subset \Omega\), which would contradict the definition of~\(m\). 

As is standard in the method of moving planes, we will consider the function \begin{align*}
v_\lambda (x) &= u(x) - u(Q_\lambda(x)), \qquad x\in \R^n
\end{align*}  for each \(\lambda \in [m,M)\). It follows that \(v_\lambda\) is antisymmetric and satisfies \begin{align*}
\begin{PDE}
(-\Delta)^s v_\lambda +c_\lambda v_\lambda &= 0 &\text{in } \Omega_\lambda' ,\\
v_\lambda &\geqslant 0 &\text{in } H_\lambda'\setminus \Omega_\lambda' ,
\end{PDE}
\end{align*} where \begin{align*}
c_\lambda (x) &= \begin{cases}
\frac{f((u(x))-f(u(Q_\lambda(x)))}{u(x) - u(Q_\lambda(x))}, &\text{if } u(x) \neq u(Q_\lambda(x)), \\
0,& \text{if } u(x) = u(Q_\lambda(x)),
\end{cases}
\end{align*} and \(H_\lambda' := Q_\lambda (H_\lambda )\). We claim that \begin{align}
 v_m \equiv 0 \qquad \text{in } \R^n. \label{VxU8s}
\end{align} 

Before we show \eqref{VxU8s}, let us first prove the weaker statement \begin{align}
v_m \geqslant 0 \qquad \text{in } H_m' . \label{g3zkO}
\end{align}  Since \(f \in C^{0,1}_{\textrm{loc}} (\R)\), it follows that \(c_\lambda \in L^\infty (\Omega_\lambda')\) and that \begin{align*}
\|c_\lambda \|_{L^\infty (\Omega_\lambda')} \leqslant  [ f ]_{C^{0,1}([0,\| u \|_{L^\infty(\Omega)}])} .
\end{align*} Here, as usual, \begin{align*}
[ f ]_{C^{0,1}([0,a])} &= \sup_{\substack{x,y\in [0,a]\\x\neq y} } \frac{\vert f(x)-f(y)\vert }{\vert x - y \vert}.
\end{align*}  We cannot directly apply Proposition~\ref{QawmgWdG} as \([ f ]_{C^{0,1}([0,\| u \|_{L^\infty(\Omega)}])}\) might be large. However,
%
%
by Proposition~\ref{ayQzh}
we have that \begin{align*}
\lambda_1 (\Omega_\lambda' ) \geqslant C \vert \Omega_\lambda' \vert^{-\frac{2s}n} \to \infty
\quad \text{ as } \, \lambda \to M^- ,
\end{align*} 
%
%
and since \(\|c_\lambda \|_{L^\infty (\Omega_\lambda')}\) is uniformly bounded with respect to \(\lambda\), Proposition~\ref{QawmgWdG} implies that there exists some \(\mu \in [m,M)\) such that \(v_\lambda \geqslant 0\) in \(\Omega_\lambda'\) for all \(\lambda \in [\mu,M)\). In fact, since \(u\) is not identically zero, after possibly increasing the value of \(\mu\) (still with \(\mu<M\)) we claim that \(v_\lambda > 0\) in \(\Omega_\lambda'\) for all \(\lambda \in [\mu,M)\). Indeed, there exists \(x_0\in \Omega\) such that \(u(x_0)>0\) so, provided \(\mu\) is close to \(M\), \(Q_\lambda (x_0) \not\in \Omega\). Then, for all \(\lambda \in [\mu,M)\), \begin{align*}
v_\lambda (x_0) = u(x_0) >0
\end{align*} so it follows from the strong maximum principle Proposition~\ref{fyoaW} that \(v_\lambda>0\) in \(\Omega_\lambda'\).

This allows us to define \begin{align*}
\tilde{m} =\inf \{ \mu\in [m,M) \text{ s.t. } v_\lambda > 0 \text{ in } \Omega_\lambda' \text{ for all } \lambda \in [\mu,M)\}.
\end{align*} We claim that \(\tilde{m} = m\). For the sake of contradiction, suppose that we have \(\tilde{m} >m\). By continuity in \(\lambda\), \(v_{\tilde{m}} \geqslant 0\) in \(H_{\tilde{m}}'\) and then, as above, Proposition~\ref{fyoaW} implies that \(v_{\tilde{m}} >0\) in \(\Omega_{\tilde{m}}'\). Due to the definition of \(\tilde{m}\), for all \(0<\varepsilon < \tilde{m}-m\), the set \(\{ v_{\tilde m-\varepsilon} \leqslant 0\} \cap \Omega_{\tilde m-\varepsilon}' \) is non-empty. Let \(\Pi_\varepsilon \subset\Omega_{\tilde m-\varepsilon}'\) be an open set such that \( \{ v_{\tilde m-\varepsilon} \leqslant 0\} \cap \Omega_{\tilde m-\varepsilon}'  \subset \Pi_\varepsilon\). By making \(\varepsilon\) smaller we may choose \(\Pi_\varepsilon\) such that \(\vert \Pi_\varepsilon\vert \) is arbitrarily close to zero. Hence, applying Proposition~\ref{QawmgWdG} then Proposition~\ref{fyoaW} gives that \(v_{\tilde m -\varepsilon} >0\) in \(\Pi_\varepsilon\) which is a contradiction. This proves \eqref{g3zkO}. 

Since \(v_m \geqslant 0\) in \(H_m'\), Proposition~\ref{fyoaW} implies that either \eqref{VxU8s} holds or \(v_m>0\) in \(\Omega_m'\). For the sake of contradiction, let us suppose that \begin{align}
v_m>0 \qquad \text{in } \Omega_m'. \label{JQNtW}
\end{align} By definition, \(m\) is the minimum value of \(\lambda\) for which \(\Omega_\lambda' \subset \Omega\). There are only two possible cases that can occur at this point. 

\begin{quotation}
Case 1: There exists \(p \in ( \Omega_m' \cap \partial \Omega) \setminus T_m\).\\
Case 2: There exists \(p \in T_m \cap \partial \Omega\) such that \(e_1\) is tangent to \(\partial \Omega\) at \(p\).
\end{quotation} 

If we are in Case 1 then there is a corresponding point \(q\in ( G_m' \cap \partial G) \setminus T_m \subset \Omega_m'\) since \(\partial G\) is parallel to \(\partial \Omega\). But \(u\) is constant on \(\partial G\), so we have \begin{align*}
v_m(q) = u(q) - u(Q_m (q)) = 0
\end{align*} which contradicts \eqref{JQNtW}. 

If we are in Case 2 then there exists \(q \in T_m \cap \partial G \) such that \(e_1\) is tangent to \(\partial G\) at \(q\). Since \(u\) is a constant on \(\partial G\), the gradient of \(u\) is perpendicular to \(\partial G\) and so \(\partial_1 u(q) = 0. \) Moreover, by the chain rule, \begin{align*}
\partial_1 (u\circ Q_m)(q) = - \partial_1u(q) = 0. 
\end{align*} Hence, \begin{align*}
\partial_1 v_m (q) = 0. 
\end{align*} However, this contradicts Proposition~\ref{lem:FLIFu}. Thus, in both Case 1 and Case 2 we have shown that the assumption \eqref{JQNtW} leads to a contradiction, so we conclude that \eqref{VxU8s} is true. 

This concludes the main part of the proof. The final two steps are to show that \(u\) is radially symmetric and that \(\supp u = \overline{\Omega}\). Due to the previous arguments we know that for all \(e\in \Sph^{n-1}\) and \(\lambda \in [m(e), M(e))\), \begin{align*}
u(x) - u(Q_{\lambda,e}(x))\geqslant 0 \qquad \text{in } H_{\lambda,e}'
\end{align*} where \(Q_{\lambda,e} =Q_{\lambda}\), \(H'_{\lambda,e} =H_{\lambda}' \) as before but we've included the subscript \(e\) to emphasise that the result is true for each \(e\). It follows that for every halfspace we either have \(u(x) - u(Q_{\partial H}(x)) \geqslant 0\) for all \(x\in H\) or \(u(x) - u(Q_{\partial H}(x)) \leqslant 0\) for all \(x\in H\). By \cite[Proposition 2.3]{MR2722502}, we can conclude that there exists some \(z\in \R^n\) such that \(x \mapsto u(x-z)\) is radially symmetric. Moreover, since \(u\geqslant0\) in \(\Omega\) and not identically zero, \cite[Proposition 2.3]{MR2722502} also implies that \(x \mapsto u(x-z)\) is non-increasing in the radial direction. 

It follows that \(\supp u\) is a closed ball. (Note that we are using the convention \\\(\supp u = \overline{\{x \in \R^n \text{ s.t. } u(x)\neq 0 \}}\)). We claim that \(\supp u = \overline{\Omega}\). For the sake of contradiction, suppose that this is not the case. Then there exists a direction \(e\in \Sph^{n-1}\) and \(\lambda \in (m(e),M(e))\) such that \(\Omega'_\lambda \cap \supp u = \varnothing \). It follows that \(v_\lambda \equiv 0\) in \(\Omega_\lambda'\). This is a contradiction since we previously showed that \(v_\lambda>0\) in \( \Omega_\lambda'\) for all \(\lambda \in (m(e),M(e))\).
\end{proof}

For $n=1$, Theorem \ref{CccFw} reads as follows.

\begin{thm}  Suppose that \(G\) is a bounded open set in \(\R\),  \(\Omega=G+B_R\), \(f:\R \to \R\) is locally Lipschitz, and \(c_0\in \R\). Furthermore, assume that there exists a non-negative function \(u \in C^s(\R)\) that is not identically zero and satisfies\begin{align} 
\begin{PDE}
(-\Delta)^s u &= f(u) &\text{in } \Omega , \\ u&=0 &\text{in } \R \setminus \Omega , \\ u &= c_0 & \text{on } \partial G .
\end{PDE} 
\end{align} Then, up to a translation, \(u\) is even, \(u>0\) in \(\Omega\), and \(G=(a,b)\) for some \(a<b\). 
\end{thm}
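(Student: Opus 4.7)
The plan is to mimic the higher-dimensional proof by carrying out the method of moving planes in the two available directions $e=+1$ and $e=-1$. Since $\Omega = G+B_R$ is a bounded open subset of $\R$, it is a countable disjoint union of open intervals; let $M = \sup\Omega$ and let $(c,M)$ be the rightmost connected component of $\Omega$. For $\lambda<M$ close to $M$, set $\Omega_\lambda = \Omega\cap(\lambda,+\infty)$, $Q_\lambda(x) = 2\lambda - x$, $\Omega_\lambda' = Q_\lambda(\Omega_\lambda)$, and $v_\lambda(x) = u(x) - u(Q_\lambda(x))$. Then $v_\lambda$ is antisymmetric about $T_\lambda=\{\lambda\}$ and satisfies $(-\Delta)^s v_\lambda + c_\lambda v_\lambda = 0$ in $\Omega_\lambda'$ with $c_\lambda\in L^\infty$.

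Applying the small-volume antisymmetric weak maximum principle (Proposition~\ref{aKht4} combined with Proposition~\ref{ayQzh}, or Proposition~3.1 of~\cite{MR3395749}) together with the strong maximum principle Proposition~\ref{fyoaW}, exactly as in the $n\geq 2$ proof, the plane slides down to a critical value $\tilde m\geq (c+M)/2$ with $v_\lambda>0$ in $\Omega_\lambda'$ for every $\lambda\in(\tilde m,M)$ and $v_{\tilde m}\geq 0$ in $\R$. At $\lambda = \tilde m$, Proposition~\ref{fyoaW} yields the dichotomy ``$v_{\tilde m}\equiv 0$ in $\R$ or $v_{\tilde m}>0$ in $\Omega_{\tilde m}'$''. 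The second alternative is ruled out as in the higher-dimensional case: ``Case~2'' (tangency of $e$ to $\partial\Omega$) is vacuous since $\partial\Omega$ is zero-dimensional, while in ``Case~1'' the reflection $Q_{\tilde m}$ pairs two boundary points of $G$, and the parallel-surface condition $u\equiv c_0$ on $\partial G$ produces some $q\in\partial G\cap \Omega_{\tilde m}'$ with $v_{\tilde m}(q) = c_0-c_0 = 0$, contradicting strict positivity. Hence $v_{\tilde m}\equiv 0$, that is, $u$ is symmetric across $T_{\tilde m}$. Notably, Theorem~\ref{lem:FLIFu} is not needed here, since it was only invoked in Case~2 of the higher-dimensional proof.

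Running the same argument with $e=-1$ yields a second symmetry axis $T_{\tilde m'}$. Composing the two reflections gives $u(x) = u(x + 2(\tilde m - \tilde m'))$, so $u$ would be periodic with period $2|\tilde m-\tilde m'|$; since $u$ has compact support and is not identically zero, the period must vanish and $\tilde m = \tilde m'$. Translating so that this common axis is the origin, $u$ is even. The same bookkeeping rules out multiple connected components of $G$: any additional pair $q_1<q_2$ of $\partial G$-points generates an ``algebraic'' stopping value $(q_1+q_2)/2$ at which $v_\lambda$ develops an interior zero, forcing (by Proposition~\ref{fyoaW}) $v_\lambda\equiv 0$ and hence yet another symmetry axis, incompatible with the one just established. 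Therefore $G=(a,b)$ is a single interval, with $a=-b$ after the translation. Strict positivity $u>0$ in $\Omega$ follows from the standard (non-antisymmetric) strong maximum principle applied to the linear equation $(-\Delta)^s u + \tilde c u = f(0)$ with $\tilde c\in L^\infty(\Omega)$, obtained by writing $f(u)=f(0)+(f(u)-f(0))$.

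The main obstacle, as in the $n\geq 2$ case, is the critical-value step: one must verify that at $\tilde m$ the dichotomy of Proposition~\ref{fyoaW} collapses to $v_{\tilde m}\equiv 0$. In one dimension the absence of Case~2 removes the need for the Hopf lemma, but the possibility of multiple connected components of $G$ introduces a new subtlety, since both geometric and algebraic stopping values of the moving plane must be tracked; the disconnectedness is then excluded by combining the resulting extra symmetries with the compactness of $\supp u$.
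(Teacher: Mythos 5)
Your proposal follows essentially the same strategy as the paper's one-dimensional proof: moving planes from both ends, weak and strong antisymmetric maximum principles, and — crucially — the observation that the tangency case (``Case 2'' of the $n\geq 2$ proof) is vacuous in one dimension, so the Hopf lemma (Theorem \ref{lem:FLIFu}) is never invoked. That observation is correct and matches the paper, which in dimension one obtains $v_{(a+b)/2}\equiv 0$ directly from the overdetermined condition $u(a)=u(b)=c_0$ and Proposition~\ref{fyoaW}, with no appeal to Theorem~\ref{lem:FLIFu}. Your periodicity argument (the composition of two reflections is a translation, incompatible with compact support unless the two axes coincide) is a tidy alternative to the paper's bookkeeping, which instead concludes that running the plane from the right gives $u\equiv 0$ outside the rightmost component, running it from the left gives $u\equiv 0$ outside the leftmost, and these together force $u\equiv 0$ if $\Omega$ is disconnected.

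There is, however, a genuine gap in your positivity step. You propose to obtain $u>0$ in $\Omega$ from the standard (non-antisymmetric) strong maximum principle applied to $(-\Delta)^s u + \tilde c u = f(0)$. This fails when $f(0)<0$: if $u(x_\star)=0$ for some $x_\star\in\Omega$, the equation gives $(-\Delta)^s u(x_\star)=f(0)$, and the integral representation gives
\begin{equation*}
(-\Delta)^s u(x_\star) = -\,c_{n,s}\,\mathrm{P.V.}\!\int_{\R} \frac{u(y)}{\lvert x_\star-y\rvert^{1+2s}}\,dy \leqslant 0,
\end{equation*}
so when $f(0)<0$ both sides are (or can be) strictly negative and no contradiction arises; the strong maximum principle argument does not close. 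The paper instead deduces positivity from the strict monotonicity furnished by $v_\lambda>0$ for all $\lambda\in\bigl(\tfrac{a+b}{2},\,b+R\bigr)$: this says $u(x)>u(y)$ whenever $\tfrac{a+b}{2}<x<y<b+R$, and since $u(b+R)=0$ it follows that $u>0$ on $\bigl(\tfrac{a+b}{2},\,b+R\bigr)$ and hence, by evenness, on all of $\Omega$. You already have $v_\lambda>0$ in hand from the moving-plane step, so replacing your strong-maximum-principle argument with this monotonicity observation removes the gap at no extra cost and with no sign hypothesis on $f(0)$.
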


\begin{proof}
In one dimension the moving plane \(T_\lambda\) is just a point \(\lambda\) and there are only two directions \(T_\lambda\) can move--from right to left or from left to right. We will begin by considering the case \(T_\lambda\) is moving from right to left. Let \((a-R, b +R)\), \(a<b\), be the connected component of \(\Omega\) such that \(\sup \Omega=b+R\). Using the same notation as in the proof of Theorem \ref{CccFw}, it is clear that \(M =b+R\) and \(m = \frac{a+b} 2 \). For all \(\lambda \in ( \frac{a+b} 2, b+R)\), let  \begin{align*}
v_\lambda (x) &= u(x) - u(-x+2 \lambda ), \qquad x\in \R. 
\end{align*} Arguing as in the proof of Theorem \ref{CccFw}, we obtain \begin{align}
v_\lambda &> 0 \qquad \text{in } (-\infty , \lambda) \label{8e2sD}
\end{align} for all \(\lambda \in  ( \frac{a+b} 2, b+R)\) and \begin{align*}
v_{\frac{a+b}2} &\geqslant 0 \qquad \text{in } (-\infty , (a+b)/2).
\end{align*} Then the overdetermined condition \(u(a)=u(b)=c_0\) implies that \begin{align*}
v_{\frac{a+b}2} (b)&= u(b) - u(a) =0.
\end{align*} Hence, Proposition~\ref{fyoaW} gives that \begin{align}
v_{\frac{a+b}2} &\equiv 0 \qquad \text{in } \R, \label{os0w4}
\end{align}that is, \(u\) is an even function about the point \(\frac{a+b}2\). Moreover, \eqref{os0w4} implies that \(u\equiv 0\) in \(\R\setminus (a-R,b+R)\). 

Now suppose that we move \(T_\lambda\) from left to right. If \(\Omega\) is made up of at least two connected components then repeating the argument above we must also have that \(u \equiv 0\) in \((a-R,b+R)\). However, \(u\) is not identically zero, so \(\Omega\) can only have one connected component. Finally, \eqref{8e2sD} implies that \(u\) is strictly monotone in \((\frac{a+b} 2 , b+R)\) which further implies the positivity of \(u\).
\end{proof}

\appendix

\section{Technical lemmas} \label{ltalz}

In this appendix, we list several lemmas that are used throughout the paper.

\begin{lem} \label{IyXUO}
Let \(I \subset \R\) and \(\Omega \subset \R^n\) be open and bounded sets in \(\R\) and \( \R^n\) respectively. Suppose that \(\phi_t(x): \overline{\Omega}\times \overline{I} \to \R\) is continuous in \(  \overline{\Omega} \times \overline{I} \) and define \begin{align*}
m(t) = \min_{x \in \overline{\Omega}} \phi_t(x) . 
\end{align*} Then \(m \in C(\overline{I})\). 
\end{lem}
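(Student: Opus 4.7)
The plan is to exploit the compactness of $\overline{\Omega}\times\overline{I}$ to obtain uniform continuity of $\phi$, and then transfer this uniform continuity to $m$ via the standard ``sup/inf of a continuous function'' argument. Since $\Omega$ and $I$ are bounded, $\overline{\Omega}$ and $\overline{I}$ are compact subsets of Euclidean space, so $\overline{\Omega}\times\overline{I}$ is compact. Note also that for each fixed $t\in\overline{I}$, $\phi_t$ is continuous on the compact set $\overline{\Omega}$, so the minimum defining $m(t)$ is actually attained at some point $x_t\in\overline{\Omega}$.

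Fix $\varepsilon>0$. By uniform continuity of $\phi$ on the compact set $\overline{\Omega}\times\overline{I}$, there exists $\delta>0$ such that
\[
|\phi_t(x)-\phi_s(y)|<\varepsilon \qquad \text{whenever } |x-y|+|t-s|<\delta.
\]
In particular, specializing to $x=y$, for all $t,s\in\overline{I}$ with $|t-s|<\delta$ we obtain the uniform-in-$x$ estimate $|\phi_t(x)-\phi_s(x)|<\varepsilon$ for every $x\in\overline{\Omega}$.

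Next I would combine this uniform bound with the attainment of the minima. Let $x_t,x_s\in\overline{\Omega}$ realize $m(t)=\phi_t(x_t)$ and $m(s)=\phi_s(x_s)$. Then, using $x_s$ as a competitor for $m(t)$,
\[
m(t)=\phi_t(x_t)\leqslant \phi_t(x_s)\leqslant \phi_s(x_s)+\varepsilon = m(s)+\varepsilon,
\]
and symmetrically $m(s)\leqslant m(t)+\varepsilon$. Hence $|m(t)-m(s)|\leqslant\varepsilon$ whenever $|t-s|<\delta$, which proves $m\in C(\overline{I})$ (indeed, uniformly continuous).

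There is no real obstacle here: the only point requiring any care is noting that $\overline{\Omega}\times\overline{I}$ is compact (so that $\phi$ is uniformly continuous and the minima are attained), after which the argument is a direct two-line ``competitor'' estimate. If one preferred to avoid the uniform-continuity step, an alternative would be a pure sequential argument: pick $t_k\to t$ in $\overline{I}$, extract a convergent subsequence of minimizers $x_{t_k}\to x_\infty\in\overline{\Omega}$, and use continuity of $\phi$ to pass to the limit in both $m(t_k)\leqslant\phi_{t_k}(x_t)$ and $m(t)\leqslant\phi_t(x_{t_k})$ to sandwich $\liminf m(t_k)$ and $\limsup m(t_k)$ between $m(t)$ and itself; but the uniform-continuity route is shorter and yields the quantitative version for free.
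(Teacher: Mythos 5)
Your proof is correct and follows essentially the same route as the paper: both use compactness of $\overline{\Omega}\times\overline{I}$ to obtain uniform continuity of $\phi$, then deduce $|m(t)-m(s)|\leqslant\varepsilon$ via a symmetric competitor argument. The only cosmetic difference is that you name the minimizers $x_t,x_s$ explicitly, while the paper minimizes the pointwise inequality over $\bar{x}$; the content is identical.
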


\begin{proof}
Since \(  \overline{\Omega} \times  \overline{I}\) is compact, \(\phi\) is uniformly continuous. Fix \(\varepsilon >0\) and let \((\bar{x},\bar{t}) \in \overline{\Omega} \times  \overline{I}\) be arbitrary. There exists some \(\delta>0\) (indepedent of \(\bar{t}\) and \(\bar{x}\)) such that if \((x,t) \in \overline{\Omega} \times  \overline{I}\) and \( \abs{(x,t)-(\bar{x},\bar{t})} < \delta \) then \begin{align*}
\abs{\phi_t(x)-\phi_{\bar{t}}(\bar{x})} < \varepsilon .
\end{align*} In particular, we may take \(x=\bar{x}\) to conclude that if \(t \in \overline{I}\) and \(\abs{t-\bar{t}} < \delta\) then \begin{align*}
\abs{\phi_t(\bar{x})-\phi_{\bar{t}}(\bar{x})} < \varepsilon 
\end{align*} for any \(\bar{x} \in \overline{\Omega}\). Consequently, \begin{align*}
\phi_t(\bar{x}) > \phi_{\bar{t}}(\bar{x}) - \varepsilon \geqslant m(\bar{t}) -\varepsilon \qquad \text{for all } \bar{x} \in \overline{\Omega}
\end{align*} Minimising over \(\bar{x}\) we obtain \begin{align*}
m(t) \geqslant  m(\bar{t}) -\varepsilon .
\end{align*} Similarly, we also have that \begin{align*}
m(\bar{t}) \geqslant  m(t) -\varepsilon . 
\end{align*} Thus, we have shown that if \(t \in \overline{I}\) and \(\abs{t-\bar{t} } < \delta\) then \begin{equation*}
\abs{m(t) -m(\bar{t} )} \leqslant \varepsilon . \qedhere
\end{equation*}
\end{proof}

\begin{lem} \label{oujju}
Let \(R>0\) and \(\zeta_R: \R \to \R \) be the solution to \begin{align}
\begin{PDE}
(-\Delta)^s \zeta_R &=0 &\text{in } (-R,R) ,\\
\zeta_R &= g_R &\text{in } \R \setminus (-R,R),
\end{PDE} \label{acLUt}
\end{align} where \begin{align*}
g_R(x) &= \begin{cases}
R, &\text{if }x >R ,\\
-R, &\text{if } x<-R . 
\end{cases}
\end{align*} Then \begin{align}
x \mapsto \frac{\zeta_R(x)}{x} \qquad \text{is  defined at }x=0 \label{JQM6p}
\end{align} and there exists a constant \(c_0=c_0(s)>0\) such that, as \(R\to \infty\), \begin{align*}
\frac{\zeta_R(x)}{x} \to c_0  \qquad \text{in } C_{\textrm{loc}} (\R).
\end{align*} 
\end{lem}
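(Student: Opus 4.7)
The plan is to exploit the symmetry of the boundary datum to show that $\zeta_R$ is odd, rescale to reduce everything to $R = 1$, identify the candidate $c_0$ as $\zeta_1'(0)$, and derive its positivity from the Hopf-type lemma in Theorem \ref{lem:FLIFu}. First, since $g_R$ is odd and \eqref{acLUt} admits a unique bounded solution, the function $x \mapsto -\zeta_R(-x)$ also solves \eqref{acLUt}, whence $\zeta_R$ itself is odd and in particular $\zeta_R(0) = 0$. Next, using the scaling $(-\Delta)^s \big( u(R\,\cdot) \big)(y) = R^{2s} \big( (-\Delta)^s u \big)(Ry)$, a direct check shows that $\tilde\zeta(y) := \zeta_R(Ry)/R$ solves \eqref{acLUt} with $R$ replaced by $1$, so by uniqueness $\tilde\zeta = \zeta_1$. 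Hence
\begin{equation*}
\zeta_R(x) \;=\; R\, \zeta_1(x/R), \qquad \frac{\zeta_R(x)}{x} \;=\; \frac{\zeta_1(x/R)}{x/R},
\end{equation*}
and the whole problem is reduced to understanding $\zeta_1$ near the origin.

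By interior regularity for the fractional Laplacian, $\zeta_1 \in C^\infty((-1, 1))$, and combined with oddness this yields the odd Taylor expansion $\zeta_1(y) = \zeta_1'(0)\, y + O(y^3)$ as $y \to 0$. In particular, $y \mapsto \zeta_1(y)/y$ extends continuously at the origin with value $\zeta_1'(0)$, which (via the identity above) proves \eqref{JQM6p} for every $R$ and identifies the natural candidate $c_0 := \zeta_1'(0)$. The crux is showing $c_0 > 0$. Via a pointwise maximum principle for antisymmetric functions in the spirit of Proposition \ref{aKht4}, carried out using the representation \eqref{u9I7E} and the fact that $\zeta_1 = 1 \geqslant 0$ on $(1, \infty)$, one obtains $\zeta_1 \geqslant 0$ on $\R_+$. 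Since $\zeta_1 \not\equiv 0$, Proposition \ref{fyoaW} upgrades this to $\zeta_1 > 0$ in $(0, 1)$, after which Theorem \ref{lem:FLIFu} (with $c \equiv 0$) delivers $\liminf_{h \to 0} \zeta_1(h)/h > 0$; the $C^\infty$-regularity at the origin turns this $\liminf$ into the genuine limit $\zeta_1'(0)$, so $c_0 > 0$.

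For the $C_{\textrm{loc}}(\R)$ convergence, fix a compact $K \subset \R$ and pick $R_0 > 0$ with $K \subset [-R_0, R_0]$. For $R > 2R_0$, the ratio $x/R$ lies in $[-\tfrac{1}{2}, \tfrac{1}{2}]$ for every $x \in K$, and $x/R \to 0$ uniformly on $K$ as $R \to \infty$. Since $y \mapsto \zeta_1(y)/y$ (extended to $c_0$ at the origin) is continuous on $[-\tfrac{1}{2}, \tfrac{1}{2}]$, the composition $\zeta_R(x)/x = \zeta_1(x/R)/(x/R)$ converges to $c_0$ uniformly on $K$. The main obstacle throughout is the strict positivity of $c_0$: this is exactly the situation that the paper's Hopf-type lemma, Theorem \ref{lem:FLIFu}, is designed to address.
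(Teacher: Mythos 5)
Your argument is correct in substance but takes a genuinely different route from the paper. The paper writes $\zeta_1$ explicitly via the one-dimensional fractional Poisson kernel, obtaining $\zeta_1(x) = 2a_s\, x\,(1-x^2)^s\int_1^\infty (t^2-x^2)^{-1}(t^2-1)^{-s}\,\dd t$; from this, existence of the limit, its positivity, and the explicit value $c_0 = 2a_s\int_1^\infty t^{-2}(t^2-1)^{-s}\,\dd t$ all drop out by dominated convergence, and the $C_{\textrm{loc}}(\R)$ convergence is obtained by a direct estimate of $\abs{\zeta_R(x)/x - c_0}$. You instead identify $c_0 = \zeta_1'(0)$ by soft arguments (oddness via uniqueness plus interior smoothness), derive strict positivity from Theorem~\ref{lem:FLIFu}, and get the locally uniform convergence from continuity of $y\mapsto\zeta_1(y)/y$ at $y=0$ together with $x/R\to 0$ uniformly on compacta. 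This is conceptually appealing and would port to settings where no Poisson kernel is known, but it forfeits the closed-form value of $c_0$ and invokes the full strength of the Hopf lemma where the paper makes do with an integral computation. (There is no circularity: Theorem~\ref{lem:FLIFu} rests on Lemma~\ref{SaBD4} and Proposition~\ref{aKht4}, neither of which uses Lemma~\ref{oujju}.)

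One step needs repair, however. Theorem~\ref{lem:FLIFu} and Proposition~\ref{aKht4} hypothesize $u\in H^s(\R^n)$, but $\zeta_1\equiv\pm1$ outside $(-1,1)$, so $\zeta_1\notin L^2(\R)$, a fortiori $\zeta_1\notin H^s(\R)$, and you cannot apply these results to $\zeta_1$ as stated. The fix is a truncation: set $w := \zeta_1\eta$ with $\eta$ a smooth even cut-off, $\eta\equiv 1$ on $[-2,2]$, $\supp\eta\subset[-3,3]$. Then $w$ is odd, bounded, compactly supported, lies in $H^s(\R)\cap C(\R)$ (using the $C^s$ boundary regularity of $\zeta_1$ at $\pm 1$), satisfies $w\geqslant 0$ on $\R_+$, $w>0$ on $(0,1)$, and for $x\in(0,1)$, using that $\zeta_1\equiv\pm1$ on $\pm(1,\infty)$ and $\eta$ is even,
\begin{align*}
(-\Delta)^s w(x) \,=\, -\,c_{1,s}\int_2^\infty\big(\eta(y)-1\big)\bigg(\frac 1{\abs{x-y}^{1+2s}}-\frac 1{\abs{x+y}^{1+2s}}\bigg)\dd y\,\geqslant\, 0,
\end{align*}
since $\eta-1\leqslant 0$ and $\abs{x-y}<\abs{x+y}$ for $x\in(0,1)$, $y>2$. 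Thus $w$ is an admissible antisymmetric supersolution, Theorem~\ref{lem:FLIFu} applies to $w$, and $\liminf_{h\to0^+}\zeta_1(h)/h = \liminf_{h\to0^+}w(h)/h >0$. The same caveat applies to your use of ``a pointwise maximum principle in the spirit of Proposition~\ref{aKht4}'' for $\zeta_1\geqslant0$ on $\R_+$: as you hint, this should be argued pointwise at a putative interior negative minimum via \eqref{u9I7E} (which is legitimate since $\zeta_1$ is bounded, continuous, and smooth in $(-1,1)$), not by citing the $H^s$-based Proposition~\ref{aKht4} itself.
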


\begin{proof} By the scale-invariance property of the fractional Laplacian, we may write \begin{align}
\zeta_R(x) &= R \zeta_1(x/R) \label{DkRUV}
\end{align} where \(\zeta_1\) is the solution to \eqref{acLUt} with \(R=1\). 
The function \(\zeta_1\) is given explicitly via the Poisson kernel, (for more details see \cite[Section 15]{MR3916700}) \begin{align*}
\zeta_1 (x) &= a_s\big ( 1 -x^2 \big)^s \int_{\R \setminus (-1,1)} \frac{g_1(t)}{(t^2-1)^s\vert x-t \vert} \dd t, \qquad x \in (-1,1)  .
\end{align*} where \(a_s\) is a positive normalisation constant. Using the definition of \(g_1\) and a change of variables, we obtain  \begin{align}
\zeta_1( x) = 2a_s x\big (1-x^2 \big )^s \int_1^\infty \frac{\dd t }{(t^2-x^2)(t^2-1)^s} . \label{XLPcN}
\end{align} Provided \(\vert x\vert <1/2\), \begin{align*}
\frac{1}{(t^2-x^2)(t^2-1)^s} \leqslant \frac 1{(t^2-(1/2)^2)(t^2-1)^s} \in L^1((1,\infty))
\end{align*} so by \eqref{XLPcN} and the dominated convergence theorem \begin{align*}
\lim_{x\to 0} \frac{\zeta_R(x)}{x} &= R\lim_{x\to 0} \frac{\zeta_1(x/R)}{x}= 2a_s  \int_1^\infty \frac{\dd t }{t^2(t^2-1)^s} =:c_0.
\end{align*} This proves \eqref{JQM6p}. 

Moreover, \begin{align}
\bigg \vert \frac{\zeta_R(x)}{x} -  c_0\bigg \vert &= 2a_s\bigg \vert    \int_1^\infty \bigg ( \frac {\big (1-(x/R)^2 \big )^s} {t^2-(x/R)^2} - \frac 1 {t^2} \bigg ) \frac{\dd t }{(t^2-1)^s} \bigg \vert \nonumber \\
&\leqslant 2a_s   \int_1^\infty \bigg \vert  \frac {t^2\big ( (1-(x/R)^2  )^s-1\big ) +(x/R)^2 \big )} {t^2\big (t^2-(x/R)^2 \big )}  \bigg \vert \frac{\dd t }{(t^2-1)^s} \nonumber \\
&\leqslant 2a_s \int_1^\infty \bigg \vert \frac{\big (1-(x/R)^2\big )^s -1}{t^2-(x/R)^2} \bigg \vert \frac{\dd t }{(t^2-1)^s} \nonumber\\
&\qquad + \frac{2a_s\vert x \vert^2}{R^2}\int_1^\infty \frac{\dd t}{t^2 \vert t^2-(x/R)^2 \vert (t^2-1)^s} . \label{ZRXKr}
\end{align} Suppose that \(x \in  \Omega' \subset \subset \R\). Then for \(R>0\) sufficiently large, \begin{align}
\frac 1 {\vert t^2-(x/R)^2  \vert } \leqslant C \qquad \text{for all } t>1 \label{RZqcI}
\end{align} and, by Bernoulli's inequality, \begin{align}
\vert \big (1-(x/R)^2\big )^s -1 \vert &\leqslant \frac C {R^2}. \label{zHcKY}
\end{align} Combining \eqref{ZRXKr}, \eqref{RZqcI}, and \eqref{zHcKY}, we conclude that \begin{align*}
\bigg \vert \frac{\zeta_R(x)}{x} - c_0\bigg \vert &\leqslant \frac C {R^2}\bigg ( \int_1^\infty  \frac{\dd t }{(t^2-1)^s}
+ \int_1^\infty \frac{\dd t}{t^2(t^2-1)^s} \bigg ) \to 0
\end{align*} as \(R \to \infty\).
\end{proof}

\begin{prop} \label{ayQzh}
Suppose that \(\Omega\) is a bounded open subset of \(\R^n\) and that \(\lambda(\Omega)\) is a Dirichlet eigenvalue of \((-\Delta)^s\) in \(\Omega\). Then \begin{align*}
\lambda(\Omega)\geqslant \frac{n}{2s} \vert B_1 \vert^{1+2s/n} c_{n,s}\vert \Omega \vert^{- \frac{2s}n } .
\end{align*} where \(c_{n,s}\) is the defined in \eqref{tK7sb}.
\end{prop}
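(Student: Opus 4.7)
The plan is to use the Rayleigh quotient characterisation of the eigenvalue together with a symmetrisation of the nonlocal part of the Gagliardo energy.

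Let $u \in \mathcal{H}^s_0(\Omega)$ be an eigenfunction associated with $\lambda(\Omega)$, so that $\lambda(\Omega) = \mathcal{E}(u,u)/\|u\|_{L^2(\Omega)}^2$. Since $u \equiv 0$ in $\R^n \setminus \Omega$, on the product domain $\Omega \times (\R^n \setminus \Omega)$ we have $(u(x)-u(y))^2 = u(x)^2$. By symmetry of the kernel, retaining only the contribution from $(\Omega \times (\R^n\setminus\Omega)) \cup ((\R^n \setminus \Omega) \times \Omega)$ in the Gagliardo integral yields
\[
\mathcal{E}(u,u) \,\geq\, c_{n,s} \int_{\Omega} u(x)^2 \left( \int_{\R^n \setminus \Omega} \frac{dy}{|x-y|^{n+2s}} \right) dx.
\]

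The main step is then to bound the inner integral from below independently of $x$, and this is the key geometric input. For each fixed $x \in \Omega$, let $B_x$ denote the ball centred at $x$ with $|B_x| = |\Omega|$, i.e.\ of radius $r = (|\Omega|/|B_1|)^{1/n}$. Writing
\[
\int_{\R^n \setminus \Omega} \frac{dy}{|x-y|^{n+2s}} - \int_{\R^n \setminus B_x} \frac{dy}{|x-y|^{n+2s}} = \int_{B_x \setminus \Omega}\frac{dy}{|x-y|^{n+2s}} - \int_{\Omega \setminus B_x}\frac{dy}{|x-y|^{n+2s}},
\]
and noting that $|B_x \setminus \Omega| = |\Omega \setminus B_x|$ while $|x-y|\le r$ on $B_x \setminus \Omega$ and $|x-y| \ge r$ on $\Omega \setminus B_x$, the right-hand side is non-negative. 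Hence
\[
\int_{\R^n \setminus \Omega} \frac{dy}{|x-y|^{n+2s}} \,\geq\, \int_{\R^n \setminus B_x} \frac{dy}{|x-y|^{n+2s}} = n|B_1| \int_r^\infty \rho^{-1-2s}\, d\rho = \frac{n|B_1|}{2s}\, r^{-2s}.
\]
Substituting $r^{-2s} = |B_1|^{2s/n} |\Omega|^{-2s/n}$ gives the uniform lower bound $\frac{n}{2s}|B_1|^{1+2s/n} |\Omega|^{-2s/n}$ on the inner integral.

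Plugging this into the lower bound for $\mathcal{E}(u,u)$ and dividing by $\|u\|_{L^2(\Omega)}^2$ yields the claim. The main obstacle is the rearrangement step isolating the inner integral; once one recognises that replacing $\Omega$ by a ball of the same volume centred at the base point $x$ only decreases the integral (by the elementary layer-cake / transport argument above), the rest is a direct computation.
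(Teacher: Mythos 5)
Your proof is correct and follows essentially the same two-step strategy as the paper: discard the $\Omega\times\Omega$ contribution to the Gagliardo energy of the normalised eigenfunction, then bound the resulting inner integral $\int_{\R^n\setminus\Omega}|x-y|^{-n-2s}\,dy$ from below by replacing $\Omega$ with the ball $B_r(x)$ of equal measure centred at $x$. The only difference is that you supply a short self-contained transport argument for that rearrangement step (splitting into $B_r(x)\setminus\Omega$ and $\Omega\setminus B_r(x)$ and comparing with $r^{-n-2s}$), whereas the paper cites it as an external lemma; the computations and the resulting constant are identical.
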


The result in Proposition \ref{ayQzh} is not new (see \cite{MR3824213,MR3063552}); however, to the authors' knowledge, the proof is original and simple.

\begin{proof}[Proof of Proposition \ref{ayQzh}]
Let \(u \in L^2(\Omega)\) satisfy \begin{align*}
\begin{PDE}
(-\Delta)^s u &= \lambda(\Omega) u &\text{in } \Omega, \\
u&= 0 &\text{in }\R^n \setminus \Omega, \\
\| u \|_{L^2(\Omega)}&=1. 
\end{PDE}
\end{align*} The existence of such a function can be proved via semi-group theory, see for example \cite[Chapter 4]{MR2569321}.

Using the integration by parts formula for the fractional Laplacian, see \cite[Lemma 3.3]{Dipierro2017Neum}, we have
\begin{align*}
\lambda(\Omega) = \frac{c_{n,s}}{2} \int_{\R^{2n} \setminus (\Omega^c)^2} \frac{\vert u(x)-u(y) \vert^2}{\vert x- y \vert^{n+2s}} \dd y \dd x \geqslant c_{n,s} \int_\Omega \int_{ \R^n \setminus \Omega } \frac{\vert u(x)\vert^2}{\vert x- y \vert^{n+2s}} \dd y \dd x. 
\end{align*}For \(x\in \Omega\), if \(B_r(x)\) is the ball such that \(\vert B_r(x)\vert = \vert \Omega\vert \) then by~\cite[Lemma 6.1]{MR2944369}, \begin{align*}
\int_{ \R^n \setminus \Omega } \frac{ \dd y }{\vert x- y \vert^{n+2s}} \geqslant \int_{ \R^n \setminus B_r(x)} \frac{ \dd y }{\vert x- y \vert^{n+2s}} =\frac{n}{2s} \vert B_1 \vert^{1+2s/n} \vert \Omega\vert^{-\frac{2s}{n}}.
\end{align*} Hence, \begin{equation*}
\lambda(\Omega) \geqslant \frac{n}{2s} \vert B_1 \vert^{1+2s/n}c_{n,s}\vert \Omega\vert^{-\frac{2s}{n}} \| u \|_{L^2(\Omega)}^2 =  \frac{n}{2s} \vert B_1 \vert^{1+2s/n}c_{n,s} \vert \Omega\vert^{-\frac{2s}{n}}.\qedhere
\end{equation*}
\end{proof}

\section*{Acknowledgments}

All the authors are members of AustMS. SD is
supported by the Australian Research Council DECRA DE180100957 “PDEs, free boundaries and
applications”.
GP is supported by the Australian Research Council (ARC) Discovery Early Career Researcher Award (DECRA) DE230100954 “Partial Differential Equations: geometric aspects and applications”, and is member of INdAM/GNAMPA.
GP and EV are supported by the Australian Laureate Fellowship FL190100081 “Minimal surfaces, free boundaries and partial differential equations”.
JT is supported by an Australian Government Research Training Program Scholarship. 

\printbibliography
 
 \vfill
\end{document}